\newtheorem{THM}{Theorem}[section]
\newtheorem*{THM-k4mf-ftree}{Theorem \ref{k4mf-ftree}}
\newtheorem{LEM}[THM]{Lemma}
\newtheorem{PROP}[THM]{Proposition}
\newtheorem{OBS}[THM]{Observation}
\newtheorem{COR}[THM]{Corollary}
\newtheorem*{CLAu}{Claim} 
\newenvironment{claimproof}[1]{\par\noindent\textit{Proof of claim.}\space#1}%
	{\hfill $\blacksquare$\smallbreak}
\theoremstyle{definition}
\newtheorem{QUES}[THM]{Question}
\newcommand{\CC}{\mathcal{C}}
\newcommand{\FF}{\mathcal{F}}
\def\AA{\mathcal{A}} 
\newcommand{\flow}{\varphi}
\newcommand{\cpc}{\gamma}
\newcommand{\rr}{r}
\def\ss{s} 
\newcommand{\xy}{\{x,y\}}
\newcommand{\uv}{\{u,v\}}
 \def\mZ{\mathbb{Z}}
 \def\zz{z}
 \let\de\delta
 \def\xx{\{x\}}
 \def\vv{\{v_1\}}
 \def\fft#1#2#3{\sum_{F \in \FF_#1(#3)} \omega_{#2}(F, #3)}
 \def\ffts#1{\fft{2}{}{#1}}
 \numberwithin{equation}{section}
\begin{document}
\let\reallabel\label
\def\labelshow#1{\reallabel{#1}\relax\textnormal{\textbf{\footnotesize [#1]}}\quad}

\title{Toughness and spanning trees in $K_4$-minor-free graphs}

\author{%
 M. N. Ellingham\thanks{Supported by Simons Foundation award no. 429625.}%
\\
  Department of Mathematics, 1326 Stevenson Center,\\
  Vanderbilt University, Nashville, TN 37240\\
  \texttt{mark.ellingham@vanderbilt.edu}%
  \and
 Songling Shan\\
  Department of Mathematics,\\
  Illinois State University, Normal, IL 61790\\
    \texttt{sshan12@ilstu.edu}\and
 Dong Ye\thanks{Supported by Simons Foundation award no. 359516.}%
 \qquad Xiaoya Zha\\
  Department of Mathematical Sciences,\\
  Middle Tennessee State University, Murfreesboro, TN 37132\\
  \texttt{dong.ye@mtsu.edu}%
  \qquad
  \texttt{xiaoya.zha@mtsu.edu}%
}

\date{30 June 2019}
\maketitle

 \begin{abstract}
 For an integer $k$, a \emph{$k$-tree} is a tree with maximum degree at
most $k$.
 More generally, if $f$ is an integer-valued function on vertices, an
\emph{$f$-tree} is a tree in which each vertex $v$ has degree at most
$f(v)$.
 Let $c(G)$ denote the number of components of a graph $G$.
 We show that if $G$ is a connected $K_4$-minor-free graph and
 $$
 c(G-S) \;\le\; \sum_{v \in S} (f(v)-1)
 	\quad\hbox{for all $S \subseteq V(G)$ with $S \ne \emptyset$}
 $$
 then $G$ has a spanning $f$-tree.  Consequently, if $G$ is a
$\frac{1}{k-1}$-tough $K_4$-minor-free graph, then $G$ has a spanning
$k$-tree.
 These results are stronger than results for general graphs due to Win
(for $k$-trees) and Ellingham, Nam and Voss (for $f$-trees).
 The $K_4$-minor-free graphs form a subclass of planar graphs, and are
identical to graphs of treewidth at most $2$, and also to graphs whose
blocks are series-parallel.
 We provide examples to show that the inequality above cannot be relaxed
by adding $1$ to the right-hand side, and also to show that our result
does not hold for general planar graphs.
 Our proof uses a technique where we incorporate toughness-related
information into weights associated with vertices and cutsets.

 \smallskip
 \noindent
 \textbf{Keywords:} toughness, spanning tree,
$K_4$-minor-free, series-parallel, treewidth.
 \end{abstract}

\vspace{2mm}

\section{Introduction}

All graphs considered are simple and finite. Let $G$ be a graph.
 We denote by $d_G(v)$ the degree of vertex $v$ in $G$.
 For $S\subseteq V(G)$ the subgraph induced on $V(G)-S$ is denoted by
$G-S$; we abbreviate $G-\{v\}$ to $G-v$.
 The number of components of $G$ is denoted by $c(G)$.
 The graph is said to be {\it $t$-tough} for a real number
$t \ge 0$ if $|S|\ge t\cdot c(G-S)$ for each $S\subseteq V(G)$ with $c(G-S)\ge
2$.
 The {\it toughness $\tau(G)$} is the largest real number $t$ for which
$G$ is $t$-tough, or $\infty$ if $G$ is complete.  Positive
toughness implies that
 $G$ is connected.  If $G$ has a hamiltonian cycle it is well known that
$G$ is $1$-tough.

 In 1973, Chv\'atal~\cite{chvatal-tough-c} conjectured that for some
constant $t_0$, every $t_0$-tough graph is hamiltonian.
 Thomassen (see \cite[p.~132]{STGT-ch6}) showed that there are
nonhamiltonian graphs with toughness greater than $3/2$.
 Enomoto, Jackson, Katerinis and Saito \cite{MR785651} showed that every
2-tough graph has a $2$-factor ($2$-regular spanning subgraph),
but also constructed $(2-\varepsilon)$-tough graphs with no $2$-factor,
and hence no hamiltonian cycle, for every $\varepsilon > 0$.
 Bauer, Broersma and Veldman \cite{Tough-CounterE} constructed
$(\frac{9}{4}-\varepsilon)$-tough nonhamiltonian graphs for every
$\varepsilon > 0$.  Thus, any such $t_0$ is at least $\frac{9}{4}$.

 There have been a number of papers on toughness conditions that
guarantee the existence of more general spanning structures in a graph.
 A {\it $k$-tree} is a tree with maximum degree at most $k$, and a {\it
$k$-walk} is a closed walk with each vertex repeated at most $k$
times.
 Note that a spanning 2-tree is a hamiltonian path and a spanning 1-walk
is a hamiltonian cycle.
 Jackson and Wormald \cite{JW-k-walks} showed that on a given vertex set
a $k$-walk can be obtained from a $k$-tree; conversely, a $(k+1)$-tree
can be obtained from a $k$-walk.
  More generally, if $f: V(G) \to \mZ$ then an \emph{$f$-tree} is a tree
with $d_T(v) \le f(v)$ for all $v \in V(T)$, and an \emph{$f$-walk} is a
closed walk that uses every vertex $v$ at most $f(v)$ times.

 The first toughness result for spanning trees of bounded degree was by
Win.

 \begin{THM}[Win \cite{Win-tough}]\label{Win-ktree}
 Suppose $G$ is a connected graph, $k \ge 2$, and
 \begin{eqnarray*}
 c(G-S) &\le& (k-2)|S| + 2
	\quad\hbox{for all $S \subseteq V(G)$}.
 \end{eqnarray*}
 Then $G$ has a spanning $k$-tree.
 \end{THM}

 \noindent
  Win's result implies that for $k \ge 3$, $\frac{1}{k-2}$-tough graphs have a
$k$-tree (and hence a $k$-walk).  Ellingham, Nam and Voss showed that
the bound on the degrees need not be constant.

 \begin{THM}[Ellingham, Nam and Voss \cite{ENV}]\label{ENV-ftree}
 Suppose $G$ is a connected graph, $f: V(G) \to \mZ$ with $f(v) \ge 2$
for all $v \in V(G)$, and
 \begin{eqnarray}\label{suf-ftree}
 c(G-S) &\le& \sum_{v \in S} (f(v)-2) + 2
	\quad\hbox{for all $S \subseteq V(G)$} .
 \end{eqnarray}
 Then $G$ has a spanning $f$-tree.
 \end{THM}

 While these conditions are sufficient, and sharp in the sense that the
right-hand side of the inequality cannot be increased by $1$, they are
not necessary.  For a necessary condition, a graph with a spanning
$k$-walk (and hence a graph with a spanning $k$-tree) must be
$\frac{1}{k}$-tough.
 However, a stronger necessary condition, which also applies for
non-constant degree bounds, can be obtained by counting components in a
tree $T$: for any $S \subseteq V(T)$, $c(T-S) \le \sum_{v \in S}
(d_T(v)-1) + 1$.  Applying this to a spanning $f$-tree of a graph $G$
gives the following.

 \begin{OBS}
 Suppose $G$ is a graph with a spanning $f$-tree.  Then
 \begin{eqnarray}\label{nec-ftree}
 c(G-S) &\le& \sum_{v \in S} (f(v)-1) + 1
	\quad\hbox{for all $S \subseteq V(G)$}.
 \end{eqnarray}
 \end{OBS}


 \noindent
 When $f(v)=k$ for all $v$, this condition is slightly weaker than being
$\frac{1}{k-1}$-tough.

 Recently some stronger versions of Theorem \ref{ENV-ftree} have been
posted by Hasanvand \cite[Subsection 4.2]{Ha17v5}, although they do not
seem to change the condition (\ref{suf-ftree}) in a substantial way.
 Hasanvand does, however, appear to have made a significant advance for
$f$-walks.
 For $k \ge 3$, as noted above, $\frac{1}{k-2}$-tough graphs have a
spanning $k$-tree and hence a spanning $k$-walk.  Jackson and Wormald
\cite{JW-k-walks} conjectured that in fact $\frac{1}{k-1}$-tough graphs
have a spanning $k$-walk.
 Hasanvand used a clever argument to combine a stronger form of Theorem
\ref{ENV-ftree} with a result of Kano, Katona and Szab\'{o} \cite{KKS09}
on the existence of spanning subgraphs with parity conditions, to show
the following.

 \begin{THM}[Hasanvand, {\cite[Theorem 5.5]{Ha17v5}}]\label{Ha-fwalk}
 Suppose $G$ is a graph, $M$ is a matching in $G$, $f: V(G) \to \mZ$
with $f(v) \ge 1$ for all $v \in V(G)$, and
 \begin{equation*}
 c(G-S) \;\;\le\;\; \sum_{v \in S} (f(v)-1) + 1
	\quad\hbox{for all $S \subseteq V(G)$}.
	\tag*{(\ref{nec-ftree}) again}
 \end{equation*}
 Then $G$ has a spanning $f$-walk that uses the edges of $M$.
 \end{THM}

 \noindent
 With $f(v)=k$ for all $v$, Theorem \ref{Ha-fwalk} verifies Jackson and
Wormald's conjecture.

 It is unknown whether Theorem \ref{ENV-ftree} can be improved for
graphs in general by weakening condition (\ref{suf-ftree}).
 It is therefore of interest to see whether this can be done for special
classes of graphs.
 In this paper we focus on $K_4$-minor-free graphs.

 A graph $H$ is a {\it minor} of a graph $G$ if a graph isomorphic to
$H$ can be obtained from $G$ by edge contractions, edge deletions and
vertex deletions; if not, $G$ is \emph{$H$-minor-free}.
 Since both  $K_{3,3}$ and $K_5$ contain a $K_4$ minor, $K_4$-minor-free
graphs are $K_{3,3}$-minor-free and $K_5$-minor-free, i.e., planar.
 The class of $K_4$-minor-free graphs includes
all \emph{series-parallel
 graphs}, constructed by
 series and parallel compositions starting from copies of $K_2$.
 Duffin \cite{MR0175809} gave three characterizations for
series-parallel graphs; in particular, he showed that
 a graph with no cutvertex is $K_4$-minor-free if and only if it is
series-parallel.
 Wald and Colbourn \cite{WC-part2tree} showed that
$K_4$-minor-free graphs are also identical to graphs of treewidth at
most $2$.

 Let $G$ be a $K_4$-minor-free graph with toughness greater than
$\frac{2}{3}$.
 The toughness implies that $G$ has no cutvertex, and in conjunction
with $K_4$-minor-freeness, also implies that $G$ is $K_{2,3}$-minor-free
(see Lemma \ref{k4-minor-free-no-N2C}). Thus, $G$ is either
$K_2$ or a $2$-connected outerplanar graph.  Thus, a $K_4$-minor-free
graph with at least three vertices and toughness greater than
$\frac{2}{3}$ is hamiltonian.
 Dvo{\v{r}}{\'a}k, Kr\'{a}l' and Teska \cite{K4MF-no-2-walk} showed that
every $K_4$-minor-free graph with toughness greater than $\frac{4}{7}$
has a spanning  $2$-walk, and they constructed a $\frac{4}{7}$-tough
$K_4$-minor-free graph with no spanning 2-walk.

 Our main result is a sufficient condition for a connected
$K_4$-minor-free graph to have an $f$-tree.  We show that in fact a very
slight strengthening of the necessary condition (\ref{nec-ftree})
suffices, subtracting $1$ from the right-hand side.
 For vertices $x, v$ in a graph, define $\de_x(v)$ to be $1$ if $x=v$,
and $0$ otherwise.

 \global\def\mainhypothesis{Let $G$ be a connected $K_4$-minor-free
graph, and $f : V(G) \to \mZ$.  Suppose that $\zz \in V(G)$ and}
 \global\def\mainconclusion{Then $G$ has a spanning $(f-\de_\zz)$-tree,
i.e., a spanning $f$-tree $T$ such that $d_T(\zz)\le f(\zz)-1$.}

 \begin{THM}\label{k4mf-ftree}
 \mainhypothesis
 \begin{equation}\label{TS}
 c(G-S) \;\;\le\;\; \sum_{v\in S}(f(v)-1)
	\quad\hbox{for all $S\subseteq V(G)$ with $S\ne \emptyset$.}
	\tag{CC}
 \end{equation}
 \mainconclusion
 \end{THM}

  The condition in
Theorem \ref{k4mf-ftree} is something we refer to frequently; we call it
the \emph{Component Condition} and refer to it as (\ref{TS}).
 Condition (\ref{TS}) with $S = \{v\}$ implies that $f(v) \ge 1$ if
$V(G) = \{v\}$, and that $f(v) \ge 2$ for all $v \in V(G)$ if $|V(G)|
\ge 2$.
 Therefore, we do not need to explicitly specify that $f$ is
nonnegative.  If we take $f(v)=k\ge 2$ for all the vertices, then
(\ref{TS}) just means that $G$ is $\frac{1}{k-1}$-tough.

\begin{COR}\label{k4mf-ktree}
 Let $G$ be a $K_4$-minor-free graph, let $k \ge 2$ be an
integer, and let $\zz \in V(G)$.
 If $G$ is $\frac{1}{k-1}$-tough, then $G$ has a spanning $k$-tree $T$
such that $d_T(\zz)\le k-1$.
 \end{COR}

 Examples show that we cannot weaken (\ref{TS}) in Theorem
\ref{k4mf-ftree} by adding $1$ to the right-hand side, which gives
(\ref{nec-ftree}). Let $f(x)\ge2, f(y)\ge 2, f(z)\ge 2$ be three
integers, and let $G$ be the $K_4$-minor-free graph obtained from a
triangle $(xyz)$ by adding $f(v)-1$ pendant edges at each $v\in
\{x,y,z\}$.   Set $f(v)=2$ for every $v\in V(G)-\{x,y,z\}$.
 Then (\ref{nec-ftree}) is easily verified, but every
spanning tree $T$ of $G$ has a vertex $v\in \{x,y,z\}$ with
$d_T(v)=f(v)+1$.

 Also, note that there is no upper bound on the values we can assign to
$f$.  If we take a specified set of vertices $X$ and let $f(v) =
|V(G)|+1$ for all $v \in V(G)-X$, (\ref{TS}) is automatically satisfied
for sets $S$ containing a vertex of $V(G)-X$ (we use this trick in some
of our proofs).
 We can therefore bound the degrees of just the specified vertices.

 \begin{COR}\label{k4mf-ftree-spec}
 Let $G$ be a connected $K_4$-minor-free graph, $X \subseteq V(G)$, and
$f : X \to \mZ$.  Suppose that $\zz \in X$ and
 \begin{equation*}
 c(G-S) \;\;\le\;\; \sum_{v\in S}(f(v)-1)
	\quad\hbox{for every $S\subseteq X$ with $S\ne \emptyset$.}
 \end{equation*}
 Then $G$ has a spanning tree $T$ such that $d_T(v) \le f(v)$ for all $v
\in X$, and $d_T(\zz)\le f(\zz)-1$.
 \end{COR}

 Toughness is an awkward parameter to deal with in arguments.
It is hard to control the toughness of a subgraph, or of a graph obtained by
some kind of reduction from an original graph.  This makes it difficult
to prove results based on toughness conditions using induction.
 However, in this paper we provide a way of doing induction using a
toughness-related condition, by first, in Section
\ref{sec-n2c}, transforming the toughness information into weights
associated with vertices and cutsets.
 This approach seems to be new, and of interest apart from our results
for $K_4$-minor-free graphs.

 Another idea in this paper is dealing with situations where we delete
independent sets of vertices separately from situations where we delete
sets that may have adjacencies.  Loosely, we expect to get more
components when we delete an independent set.  The concept of
`fundamental' graphs in Section \ref{sec-n2c} is used to implement this
distinction.  In Section \ref{proof-k4mf-ftree} we prove a result for
fundamental graphs first, then extend it to more general graphs.

\section{Structure of nontrivial 2-cuts }
\label{sec-n2c}

 In this section we show how to convert a toughness-related condition
into weights associated with vertices and certain cutsets.
 The results in this section apply to all graphs, not just to
$K_4$-minor-free graphs.

 If $G$ is a graph and $H \subseteq G$ ($H$
may just be a set of vertices) then a \emph{bridge of
$H$} or \emph{$H$-bridge} in $G$ is a
subgraph of $G$ that is
 either an edge not in $H$ joining two vertices of
$H$ (a \emph{trivial} bridge), or a component of
$G-V(H)$
together with all edges joining it to $V(H)$
 (a \emph{nontrivial} bridge).
 The \emph{set of attachments} of an $H$-bridge $B$ is
$V(B) \cap V(H)$. For $S \subseteq
V(G)$, when no confusion will result, we also refer to the set of
attachments of a component of $G-S$, meaning that of the corresponding
$S$-bridge.
 A \emph{$k$-cut} in $G$ is a set $S$ of $k$ vertices for which $G-S$ is
disconnected.
 If $F = \{u, v\}$ is a set of two vertices in $G$, let $c(G,F)$ or
$c(G, uv)$ denote the number of nontrivial $F$-bridges that contain both
vertices of $F$.  The notation $c(G, uv)$ does not imply that $uv \in
E(G)$.
 A \emph{nontrivial $2$-cut} or \emph{N2C} in $G$ is a set $F$ of two
vertices with $c(G, F) \ge 3$.

 A \emph{block} is a connected graph with no cutvertex, and a
 \emph{block of $G$} is a maximal subgraph of $G$ that is itself
a block. Let $\mathcal{B}$ be the set of blocks and $\mathcal{C}$ the
set of cutvertices of $G$. The {\it block-cutvertex tree}  of a
connected graph $G$ has vertex set $\mathcal{B}\cup \mathcal{C}$, and
$c\in \mathcal{C} $ is adjacent to $B\in \mathcal{B}$  if and only if
the block $B$ contains the cutvertex $c$.
 A block of $G$ is a \emph{leaf block} if it is a leaf of the
block-cutvertex tree.
 If we choose a particular \emph{root edge} $e_0$ of $G$,
then the block $B_0$ containing $e_0$ is the \emph{root block} which we
treat as the root vertex of the block-cutvertex tree.
 Every block other than $B_0$ has a unique \emph{root vertex}, namely
its parent in the rooted block-cutvertex tree.

 In what follows we try to develop information to help us construct a
spanning tree with restricted degrees, by allocating the bridges of each
nontrivial $2$-cut $\{u, v\}$ between $u$ and $v$.  We begin with some
basic properties of nontrivial $2$-cuts.

\begin{LEM}\label{SP-tree2}
 Let $\{u,v\}$ be an N2C in a graph $G$.

 \noindent
 \textnormal{(a)} The graph $G$ has three internally
disjoint $uv$-paths.

 \noindent
 \textnormal{(b)} If $S \subseteq V(G)-\{u,v\}$ and
$|S|\le 2$ then $u$ and $v$ lie in the same component of $G-S$.

 \noindent
 \textnormal{(c)} If $\{x,y\}$ is an N2C of $G$
distinct from $\{u,v\}$, then $u$ and $v$ lie in a unique
$\{x,y\}$-bridge.
 \end{LEM}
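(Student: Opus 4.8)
The plan is to prove the three parts in order, since (ii) will use (i) and (iii) will use both (i) and (ii). For (i) I would simply unwind the definition of an N2C. As $c(G,\uv)\ge 3$, there are at least three distinct nontrivial $\uv$-bridges each containing both $u$ and $v$; let $B_1,B_2,B_3$ be three of them, arising from components $C_1,C_2,C_3$ of $G-\uv$. Each $C_i$ is connected and adjacent to both $u$ and $v$, so choosing a neighbor $a_i\in C_i$ of $u$ and a neighbor $b_i\in C_i$ of $v$ and joining them by a path inside $C_i$ yields a $uv$-path $P_i$ whose interior lies in $C_i$. Because the $C_i$ are pairwise disjoint components of $G-\uv$, the interiors of $P_1,P_2,P_3$ are pairwise disjoint, giving three internally disjoint $uv$-paths.

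For (ii) I would combine the three paths from (i) with a counting argument. Each vertex of $S$ lies in the interior of at most one $P_i$ (the interiors are pairwise disjoint), and $u,v\notin S$; so with $|S|\le 2$ at most two of the three paths meet $S$. Hence at least one $P_i$ avoids $S$ entirely and survives in $G-S$, placing $u$ and $v$ in the same component.

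For (iii) I would split into cases according to the size of $\uv\cap\xy$. Since the two N2Cs are distinct this intersection has size $0$ or $1$. When it is $0$, part (ii) applied with $S=\xy$ immediately puts $u$ and $v$ in one component of $G-\xy$, hence in a common nontrivial $\xy$-bridge; uniqueness follows because each of $u,v$, lying outside $\xy$, belongs to exactly one $\xy$-bridge. The remaining case, which I expect to be the main obstacle, is when exactly one of $u,v$ lies in $\xy$, say $u=x$ and $v\notin\xy$. Then $v$ lies in a unique nontrivial $\xy$-bridge $B$, and the task is to show that $u=x$ is also in $B$, i.e.\ that $x$ is an attachment of $B$. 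Here I would again invoke the three internally disjoint $uv$-paths: since $y$ can lie in the interior of at most one of them, at least two avoid $y$. Following such a path from $u=x$, its first internal vertex (or $v$ itself, should the path be the single edge $uv$) is a neighbor of $x$ that remains joined to $v$ in $G-\xy$, so it lies in $v$'s component and witnesses that $x$ attaches to $B$. Thus both $u$ and $v$ lie in $B$, and uniqueness once more follows since $v$ determines its bridge uniquely.

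The only delicate point is this last subcase: one must verify that the shared vertex attaches to the \emph{specific} bridge containing the other vertex, rather than merely to some bridge. The internally disjoint paths of (i), and the observation that $y$ kills at most one of them, are exactly what make this work, so the lemma reduces cleanly to part (i) in every case.
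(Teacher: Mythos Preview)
Your proof is correct and follows essentially the same line as the paper's: part (i) picks a path through each of three bridges, (ii) observes that two vertices can kill at most two of these paths, and (iii) reduces to (ii). The only difference is organizational: the paper handles (iii) in one stroke by assuming without loss of generality that $u\notin\{x,y\}$ and then arguing that the unique $\{x,y\}$-bridge containing $u$ also contains $v$ (trivially if $v\in\{x,y\}$, and by (ii) otherwise), whereas you split explicitly on $|\{u,v\}\cap\{x,y\}|$; your version is slightly longer but more carefully justifies the shared-vertex case that the paper treats as immediate.
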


 \begin{proof}
 Since $c(G,uv) \ge 3$, there are three different $\{u,v\}$-bridges
that attach at both $u$ and $v$ and we can take a path through
each, proving (a).  Then (b) follows immediately.  For (c) we may assume
that $u \notin \{x,y\}$.  Then the unique $\{x,y\}$-bridge containing
$u$ also contains $v$, either because $v \in \{x,y\}$, or by (b)
otherwise.
 \end{proof}

 Let $G$ be a connected graph,  $\FF$ the set of N2Cs of $G$,  and
$W=\bigcup _{F\in \FF} F $.
 The graph with vertex set $W$ and edge set $\FF$ is called the
 \emph{N2C graph of $G$}.
  If $W$ is an independent set in $G$, we call $G$ \emph{fundamental}.

 The following observations come from comparing the components of $G-S$
and of $G'-S$ for some subgraph (block or union of
$\{x,y\}$-bridges) $G'$ of $G$, and using Lemma \ref{SP-tree2}.

 \begin{OBS}\label{n2cblock}
 Suppose $G$ is a connected graph with $|V(G)| \ge 2$.
 Let $u, v \in V(G)$.
 If $u$ and $v$ are not in a common block of $G$, then $c(G, uv) = 1$; if they belong to a common block $B$ then
$c(B, uv) = c(G, uv)$.

 Thus, the N2Cs of a block $B$ are the N2Cs $\{u,v\}$ of $G$ with
$\{u,v\} \subseteq V(B)$, and if $G$ is fundamental then $B$ is also
fundamental.
 Conversely, the set of N2Cs of $G$ is the disjoint union of the sets of
N2Cs of its blocks, and if each block is fundamental then $G$ is
fundamental.
 \end{OBS}

 \begin{OBS}\label{n2cbridge}
 Suppose $\xy$ is an N2C of a $2$-connected graph $G$.
 Let $G'$ be the union of two or more nontrivial $\xy$-bridges of $G$
(possibly $G'=G$).
 Then $G'$ is $2$-connected.
 If $u$ and $v$ are not in a common $\xy$-bridge in $G'$, then
 $c(G, uv)=1$; if they belong to a common
$\xy$-bridge in $G'$ and $\uv \ne \xy$ then $c(G',uv) = c(G, uv)$.

 Thus, the N2Cs of $G'$ are the N2Cs $\uv$ of $G$ with $\uv \subseteq
V(D)$ for some $\xy$-bridge $D$ of $G'$, except that $\xy$ is not
necessarily an N2C of $G'$.
 If $G$ is fundamental then $G'$ is also fundamental.
 \end{OBS}

 Observation \ref{n2cbridge} is not necessarily true if we take $G'$ to
be a single nontrivial $\xy$-bridge; then $G'$ may not be $2$-connected,
and we have $c(G',uv) > c(G,uv)$ if $x$ and $y$ are in different
components of $G'-\uv$.

 \begin{LEM}\label{N2Csubgraph}
 Suppose $J$ is a connected subgraph of the N2C graph $H$ of a graph
$G$.

 \noindent
 \textnormal{(a)} If $S \subseteq V(G)$, $|S| \le 2$ and $S \cap V(J) =
\emptyset$, then $V(J)$ lies in a single component of $G-S$.

 \noindent
 \textnormal{(b)} If $J'$ is a connected subgraph of $H$
vertex-disjoint from $J$, then $V(J)$ lies in a single component of
$G-V(J')$.

 \noindent
 \textnormal{(c)} If $u, v \in V(J)$ are separated in $G$ by a cutvertex
$x$ of $G$, then $x$ is also a cutvertex of $J$ separating $u$ and $v$
in $J$.
 %
 \end{LEM}

 \begin{proof}
 (a) By Lemma \ref{SP-tree2}(b), for each $uv \in E(J)$, $u$ and $v$
are in the same component of $G-S$.  Applying this repeatedly, all of
$V(J)$ is in a single component of $G-S$.

 (b) Let $uv \in E(J)$.  By (a), all of $V(J')$ lies in the same
component of $G-\{u,v\}$, say $C$.  Since $\{u,v\}$ is an N2C of $G$,
there is a path from $u$ to $v$ in
$G-V(C) \subseteq G-V(J')$, so $u$
and $v$ are in the same component of $G-V(J')$.  Applying this
repeatedly, all of $V(J)$ is in a single component of $G-V(J')$.

 (c) If there is a $uv$-path in $J$ that does not contain $x$, then by
(a) this path lies in a single component of $G-x$, contradicting the
choice of $u$ and $v$.  Thus, every $uv$-path in $J$ contains $x$, so
$x$ is a cutvertex separating $u$ and $v$ in $J$.
 %
 \end{proof}


 Our overall strategy now is to use our toughness-related condition to
assign weights associated with (N2C, vertex) ordered pairs.  We show
that we can either (a) assign weights satisfying certain conditions, or
else (b) find a set that violates our toughness-related
condition.
 The following proposition, giving a lower bound on
$c(G-U)$ for certain subsets $U$, will be used to demonstrate (b).
 The statement of this result and many of our
computations involve terms of the form $c(G-u)-1$ and $c(G, uv)-2$.  The
reader may wonder why we do not simplify these; the answer is that we
often think of the graph as having a `main' part consisting of one of
the bridges of a cutvertex, or two of the bridges of a $2$-cut, and
these terms count the number of `extra' bridges outside the `main' part.

\begin{PROP}\label{com-counting}
 Let $J$ be a connected subgraph of the N2C graph of a connected
fundamental graph $G$, and $U = V(J)$.  Then
 \begin{eqnarray}\label{comp-count}
  c(G-U)\ge \sum_{uv\in E(J)}(c(G,uv)-2)+\sum_{w\in U}(c(G-w)-1)+|U|.
 \end{eqnarray}
\end{PROP}

\begin{proof}
 The proof is by induction on $|E(J)|$.  It is easy to check that the
conclusion is true if $|E(J)|=0$, when $|U|=|V(J)|=1$.  So we assume
that $|E(J)|\ge 1$.
  We consider three cases.

 \smallskip\noindent
 \textbf{Case 1:} Suppose $G$ has no cutvertex.  Since
$|E(J)|\ge 1$, $G$ has at least five vertices, so $G$ is $2$-connected.
For the $2$-connected case (\ref{comp-count}) simplifies to
 \begin{eqnarray}\label{comp-count-2c}
  c(G-U)\ge \sum_{uv\in E(J)}(c(G,uv)-2)+|U|.
 \end{eqnarray}
 Let $xy \in E(J)$ and let the $\{x,y\}$-bridges in $G$ be $D_1, D_2,
\dots, D_k$, where $k = c(G,uv) \ge 3$.
 For each $i$ let $U_i = U \cap V(D_i) \supseteq \{x,y\}$ and
$J_i = J[U_i] - xy$, so that $V(J_i)
= U_i$.

 Let $G_i = D_i \cup xa_iy$, where $a_i$ is a new vertex.
 Each $G_i$ is a minor of $G$, and hence $K_4$-minor-free.
 We may apply Observation \ref{n2cbridge} to both $G_i$ and $G$,
considered as subgraphs of $G \cup xa_i y$.
 We see that $G_i$ is $2$-connected.
 Suppose that $\uv
\subseteq V(G_i)$ and $\uv \ne \xy$.  If $a_i \in \uv$, then $c(G_i,
uv)=1$.  If $a_i \notin \uv$ then $c(G_i, uv) = c(G \cup xa_iy, uv) =
c(G,uv)$.  Thus, $\uv \ne \xy$ is an N2C of $G_i$ if and only if it is
an N2C of $G$ with $\uv \subseteq V(G_i)$.  It follows that $G_i$ is
fundamental.
 Moreover, if $H_i$ is the N2C graph of $G_i$, we have $E(J_i) \subseteq
E(H_i)$ and $V(J_i) \subseteq V(H_i) \cup \{x,y\}$ (each of $x$ and $y$
is in $J_i$, but could possibly be in no N2C of $G_i$, in which case it
is an isolated vertex of $J_i$ and not a vertex of $H_i$).
 Since $xy \notin E(J_i)$, $|E(J_i)| < |E(J)|$ for all $i$.
 (In future we will provide less detail when applying Observation
\ref{n2cbridge}, but the reasoning will be similar to the reasoning
here.)

 \begin{CLAu}\label{claimcount}
 We have
 $\displaystyle c(G_i - U_i) \ge \sum_{uv \in E(J_i)} (c(G_i, uv) - 2) + |U_i|$ for each
$i=1,2,\dots k$.
 \end{CLAu}

 \begin{claimproof}
 First suppose that $J_i$ is connected.  Then $x$ and $y$ are incident
with edges of $J_i$, so they are contained in N2Cs of $G_i$, and so $J_i
\subseteq H_i$.  Thus, the claim follows because we can apply
(\ref{comp-count-2c}) to $G_i-U_i$ and $J_i$ by induction.

 Now suppose that $J_i$ is disconnected.  Since $J$ is connected, $J_i$
must have exactly two components $J_i^x$ and $J_i^y$, containing $x$ and
$y$ respectively.  Let $U_i^x = V(J_i^x)$ and $U_i^y = V(J_i^y)$.
 Consider the components of $G_i-U_i$ and divide them into three groups:
$\CC_x$ are those all of whose attachments belong to $U_i^x$, $\CC_y$ are
those all of whose attachments belong to $U_i^y$, and $\CC_{xy}$ are those
that have attachments in both $U_i^x$ and $U_i^y$.

 We claim that $|\CC_{xy}| \ge 2$.  Obviously $a_i \in \CC_{xy}$.
 Take an $xy$-path $v_0 v_1 \dots v_t$ in $D_i$.  There is a subpath
$P=v_i v_{i+1} \dots v_j$ such that $v_i \in U_i^x$, $v_j \in U_i^y$ and
the internal vertices (if any) of $P$ belong to neither $U_i^x$ nor
$U_i^y$.  Since $G$ is fundamental, $P$ does have internal vertices,
which belong to a second component in $\CC_{xy}$.

 Now we claim that
 \begin{eqnarray}\label{ineqcx}
 \displaystyle |\CC_x| \ge \sum_{uv\in E(J_i^x)}(c(G_i,uv)-2)+|U_i^x|-1.
 \end{eqnarray}
 If $|U_i^x|=1$ this just says that $|\CC_x| \ge 0$, which is
trivially true.  So we may suppose that $|U_i^x| \ge 2$, which means
that $x$ is in an N2C of $G_i$ and $J_i^x \subseteq H_i$.  Then $U_i^y$ is in a single component of
$G_i-U_i^x$, either trivially if $|U_i^y|=1$, or by Lemma
\ref{N2Csubgraph}(b) if $|U_i^y| \ge 2$ so that $J_i^y$ is a subgraph of
$H_i$.
 The component of $G_i-U_i^x$ containing $U_i^y$ must also contain all
components in $\CC_y \cup \CC_{xy}$.
 On the other hand, each $C \in \CC_x$ is a separate component of $G_i -
U_i^x$.  Therefore, $c(G_i-U_i^x) = |\CC_x|+1$.  Using this and
applying (\ref{comp-count-2c}) to $G_i-U_i^x$ and $J_i^x$ by induction
gives  (\ref{ineqcx}).

 We have an inequality similar to (\ref{ineqcx}) for $\CC_y$.
Therefore,
 \begin{eqnarray*}
 c(G_i-U_i) &=& |\CC_x| + |\CC_y| + |\CC_{xy}| \\
  &\ge&
 \sum_{uv\in E(J_i^x)}(c(G_i,uv)-2)+|U_i^x|-1 + \\
 &&\qquad\qquad \sum_{uv\in E(J_i^y)}(c(G_i,uv)-2)+|U_i^y|-1 + 2 \\
  &=& \sum_{uv \in E(J_i)} (c(G_i, uv) - 2) + |U_i|
 \end{eqnarray*}
 which proves the claim.
 \end{claimproof}

 Now we prove (\ref{comp-count-2c}) and hence (\ref{comp-count}).  For
each $i$, $c(D_i-U_i) = c(G_i-U_i)-1$ since we lose the component $a_i$.
 Thus,
 \begin{eqnarray*}
 c(G-U) &=& \sum_{i=1}^k c(D_i-U_i) \;\;=\;\; \sum_{i=1}^k (c(G_i-U_i)-1) \\
	&\ge& \sum_{i=1}^k \left( \sum_{uv \in E(J_i)} (c(G,uv)-2)
		+ |U_i| - 1 \right) \\
	&&\qquad\qquad\hbox{by the claim and since $c(G_i,uv)=c(G,uv)$} \\
	&=&  \sum_{uv \in E(J-xy)} (c(G,uv)-2) + |U| + 2(k-1) - k \\
	&&\qquad\qquad\hbox{since $x$ and $y$ are overcounted $k-1$ times each} \\
	&=&  \sum_{uv \in E(J-xy)} (c(G,uv)-2) + |U| + c(G,xy)-2 \\
	&=&  \sum_{uv \in E(J)} (c(G,uv)-2) + |U|.
 \end{eqnarray*}

 \smallbreak\noindent
 \textbf{Case 2:} Suppose $G$ has a cutvertex but all vertices of $J$
lie in a single block $B$ of $G$.
 If $S \subseteq U$ then each component of $G-S$ is either (a) a
component $C$ of $B-S$ together with all $B$-bridges in $G$ that attach
at a vertex of $C$, or (b) a component of $G-w$ not containing $B-w$, for some $w \in S$.
Thus,
 \begin{eqnarray}\label{c2a}
 c(G-U) = c(B-U) + \sum_{w \in U} (c(G-w)-1) .
 \end{eqnarray}
 By Observation \ref{n2cblock}, $B$ is fundamental, $J$ is a subgraph of
the N2C graph of $B$, and
 \begin{eqnarray}\label{c2c}
 c(G,uv) = c(B,uv) \quad\hbox{for all $uv \in E(J)$} .
 \end{eqnarray}
 Applying Case 1 to $B$, we know from (\ref{comp-count-2c}) that
 \begin{eqnarray}\label{c2b}
 c(B-U) \ge \sum_{uv \in E(J)} (c(B, uv)-2) + |U| .
 \end{eqnarray}
 Combining (\ref{c2a}), (\ref{c2c}) and (\ref{c2b}) gives
(\ref{comp-count}).

 \smallbreak\noindent
 \textbf{Case 3:} Suppose the vertices of $J$ do not all lie in a
single block of $G$.  Then there is a cutvertex $x$ of $G$ and vertices
$u, v$ of $J$ in distinct components of $G-x$.
 Let $G_1$ be the $x$-bridge in $G$ containing $u$, and $G_2$ the union
of all other $x$-bridges in $G$, which contains $v$.

  By Lemma \ref{N2Csubgraph}(c), $x$ is also a cutvertex of $J$
separating $u$ and $v$, and it follows from Lemma \ref{N2Csubgraph}(a)
that the vertex set of each $x$-bridge in $J$ lies in some $x$-bridge in
$G$.
 So for $i=1,2$  let $J_i$ be the union of the
$x$-bridges in $J$ whose vertex sets lie in $G_i$.  We have $u \in
V(J_1)$, $v \in V(J_2)$, $V(J_1) \cap V(J_2) = \{x\}$, and $J = J_1 \cup
J_2$.
 Since $|V(J_1)|, |V(J_2)| \ge 2$, we have $|E(J_1)|, |E(J_2)| > 0$, and
so $|E(J_1)|, |E(J_2)| < |E(J)|$.

 Let $U_i = V(J_i)$ for $i = 1,2$.
 Thinking of $G$, $G_1$ and $G_2$ as the unions of their blocks, it follows
from Observation \ref{n2cblock} that
 the N2Cs of $G_i$ are the N2Cs $\{u,v\}$ of $G$ with $\{u,v\} \subseteq
V(G_i)$, $J_i$ is a subgraph of the N2C graph of $G_i$,
 $c(G_i,uv)=c(G,uv)$ for $uv \in E(J_i)$, and $G_i$ is fundamental.
 Applying (\ref{comp-count}) to $G_i-U_i$ and $J_i$ by induction for
each $i$,
 \begin{eqnarray*}
 c(G-U) &=& \sum_{i=1}^2 c(G_i-U_i) \\
	&\ge& \sum_{i=1}^2 \left(
		\sum_{uv\in E(J_i)}(c(G_i,uv)-2)+
			\sum_{w\in U_i}(c(G_i-w)-1)+|U_i| \right) \\
	&=& \sum_{uv\in E(J)}(c(G,uv)-2)+
			\sum_{w\in U}(c(G-w)-1)-1+|U|+1 \\
	&&\quad\hbox{since $c(G_1-x)+c(G_2-x)=c(G-x)$
		and $|U_1|+|U_2| = |U|+1$} \\
  	&=& \sum_{uv\in E(J)}(c(G,uv)-2)+\sum_{w\in U}(c(G-w)-1)+|U|,
 \end{eqnarray*}
 proving (\ref{comp-count}).
 \end{proof}


 \begin{THM}\label{secondW}
 Let $\FF$ be the set of all N2Cs in a connected fundamental graph $G$, and let $W =
\bigcup \FF$ be the set of vertices used by $\FF$.
 For each $v \in V(G)$, let $\FF(v) = \{F \in \FF \;|\; v \in F\}$.
 If $f : V(G) \to \mZ$ satisfies
 \begin{equation*}
  c(G-S)\;\;\le\;\; \sum_{v\in S}(f(v)-1)
	\quad\hbox{for all $S \subseteq V(G)$ with $S \ne \emptyset$}
	\tag{\ref{TS}}
 \end{equation*}
 then there is a nonnegative integer function
$\omega$ on ordered pairs $(F,u)$ with $F\in \FF$ and $u\in F$
such that
 \begin{eqnarray}
 \omega(F, u)+\omega(F,v) &=& c(G,F)-2
	 \quad\hbox{for all $F = \{u,v\} \in \FF$, and}
	\label{wforf}\\
 \sum_{F\in \FF(u)}\omega(F,u) &\le&  f(u)-c(G-u)-1
	 \quad\hbox{for all $u \in V(G)$}.
	\label{wforu}
 \end{eqnarray}
 \end{THM}

 \begin{proof}
 If $v\in V(G)-W$, then $\FF(v) = \emptyset$ so $\sum_{F\in
\FF(v)}\omega(F,v)=0$.
 So (\ref{wforu}) is equivalent to $c(G-v)\le f(v)-1$
 which follows from (\ref{TS}) by taking $S=\{v\}$.
 Moreover, (\ref{wforf}) does not involve any vertices not in $W$.
Thus, in constructing $\omega$, the only vertices we need to be
concerned with are those in $W$.

 We associate with $G$ a network
$N$.
 Its vertex set is $V(N)=\{s,t\}\cup \FF\cup W$ where $s, t$ are new
vertices.
 Its arc set $A(N)$ consists of three subsets: $A_1 = \{sF \;|\; F \in
\FF\}$, $A_2 = \{Fu \;|\; F \in \FF, u \in W, u \in F\}$, and $A_3 =
\{ut \;|\; u \in W\}$.
 Each arc $a$ has a capacity $\cpc(a)$ defined as follows:
 $$
  \begin{array}{ll}
  \cpc(sF)=c(G,F)-2 & \hbox{if $sF\in A_1$,} \\
  \cpc(Fu)=\infty  & \hbox{if $Fu \in A_2$, \quad and}\\
  \cpc(ut)=f(u)-c(G-u)-1  & \hbox{if $ut \in A_3$.}
  \end{array}
 $$

 We claim  that a maximum $st$-flow $\flow$ of value $
  \Phi = \sum_{sF \in A_1} \cpc(sF)
 $
 in $N$ gives a desired way of distributing the weights on N2Cs to the
vertices, by taking $\omega(F, u) = \flow(Fu)$ for all $Fu \in A_2$.
 All arcs in $A_1$ must be saturated by such a flow, so flow conservation at
a vertex $F \in \FF$, where $F=\{u,v\}$, gives
 $$\omega(F, u) + \omega(F, v) = \flow(Fu) + \flow(Fv) = \flow(sF) =
\cpc(sF) = c(G, F)-2$$
 which verifies (\ref{wforf}), and flow conservation at a vertex $u \in
W$ gives
 $$\sum_{F \in \FF(u)} \omega(F, u) = \sum_{Fu {\fam0\ enters\ }u}
\flow(Fu) = \flow(ut) \le \cpc(ut) = f(u)-c(G-u)-1
 $$
 which verifies (\ref{wforu}).

 So assume that $N$ does not have a maximum flow of value $\Phi$; we
will show that this gives a contradiction.
 By the Max-Flow Min-Cut Theorem, $N$ has an $st$-cut
 $$ [S,T]=[\{s\}\cup \FF_1\cup W_1, \FF_2\cup W_2 \cup \{t\}]
	= [\{s\}, \FF_2] \cup [\FF_1, W_2] \cup [W_1, t]
 $$
 such that $\cpc(S,T)< \Phi$.
 Here $[Q,R]$ denotes all arcs from $Q$ to $R$, $\FF_1\cup \FF_2= \FF$,
$\FF_1 \cap \FF_2 = \emptyset$, $W_1\cup W_2=W$, and $W_1 \cap W_2 =
\emptyset$.

 If $\FF_2 = \FF$ then $\cpc(S,T) \ge \Phi$ which is a contradiction, so
$\FF_2 \ne \FF$ and $\FF_1 \ne \emptyset$.
 Since arcs in $A_2$ have infinite capacity and $\cpc(S,T) < \infty$, we
must have $[\FF_1, W_2] = \emptyset$. Therefore,
 $$ \cpc(S,T)=\sum_{F \in \FF_2} \cpc(sF) + \sum_{w \in W_1} \cpc(wt)
 = \sum_{\{u,v\}\in \FF_2} (c(G,uv)-2)+\sum_{w\in W_1}(f(w)-c(G-w)-1).
 $$
 Since $\cpc(S,T)<\Phi = \sum_{sF \in A_1} \cpc(sF) = \sum_{\{u,v\} \in
\FF} (c(G, uv)-2)$, we obtain
 $$
 \sum_{w\in W_1}(f(w)-c(G-w)-1)<\sum_{\{u,v\}\in \FF_1}(c(G,uv)-2),
 $$
or
 $$
 \sum_{\{u,v\}\in \FF_1}(c(G,uv)-2)+\sum_{w\in W_1}(c(G-w)-1)>\sum_{w\in W_1}(f(w)-2).
 $$

 Since $[\FF_1, W_2] = \emptyset$, $\bigcup \FF_1 \subseteq W_1$.
 So we may consider the graph $H_1$ with vertex set $W_1$ and edge set
$\FF_1$. By the Pigeonhole Principle, there is a component $J$ of $H_1$
with vertex set $U\subseteq W_1$ such that
 \begin{equation*}
 \sum_{uv\in E(J)}(c(G,uv)-2)+\sum_{w\in U}(c(G-w)-1)>\sum_{w\in U}(f(w)-2).
 \end{equation*}
 Combining this with Proposition \ref{com-counting}, we get
 \begin{eqnarray*}
   c(G-U)& \ge & \sum_{uv\in E(J)}(c(G,uv)-2)+\sum_{w\in U}(c(G-w)-1)+|U| \\
   & > &  \sum_{w\in U}(f(w)-2)+ |U| = \sum_{w\in U}(f(w)-1),
 \end{eqnarray*}
giving the required contradiction to (\ref{TS}).
 \end{proof}

\section{Proof of Theorem~\ref{k4mf-ftree}}
\label{proof-k4mf-ftree}

 In this section we use
Theorem~\ref{secondW} to prove Theorem~\ref{k4mf-ftree}.
 We start with some preliminary results.

 \begin{LEM}\label{N2C-subgraph-k4mf}
 Suppose $H$ is a subgraph of a $K_4$-minor-free graph $G$.
 Then every N2C of $H$ is also an N2C of $G$.
 \end{LEM}

 \begin{proof}
 Suppose $\{u,v\}$ is an N2C of $H$ that is not an N2C of $G$.  Then
$c(G, uv) < c(H, uv)$
 so there must be a path $P$ in $G-\{u,v\}$ between two components $C_1,
C_2$ of $H-\{u,v\}$ that attach at both $u$ and $v$, where we
may assume that the internal vertices of $P$ (if any) are not vertices
of $H$.  Let $C_3$ be a third component of $H-\{u,v\}$
attaching at both $u$ and $v$.  The subgraph
 $H[\{u,v\}
\cup V(C_1 \cup C_2 \cup C_3)] \cup P$ of $G$ contains a $K_4$ minor,
which is a contradiction.
 \end{proof}

 \begin{LEM}\label{k4-minor-free-no-N2C}
 If $G$ is $K_4$-minor-free and $G$ has no N2C, then $G$ is outerplanar.
 \end{LEM}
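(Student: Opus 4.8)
The plan is to prove the contrapositive: if $G$ is $K_4$-minor-free and not outerplanar, then $G$ has an N2C. I would invoke the classical forbidden-minor characterization of outerplanar graphs (Chartrand and Harary): a graph is outerplanar if and only if it has no $K_4$-minor and no $K_{2,3}$-minor. Since $G$ is $K_4$-minor-free but not outerplanar, it must contain a $K_{2,3}$-minor. Because $K_{2,3}$ has maximum degree $3$, having it as a minor is equivalent to having it as a topological minor, so $G$ contains a subdivision of $K_{2,3}$. Such a subdivision is exactly a pair of vertices $a,b$ joined by three internally disjoint paths $P_1,P_2,P_3$, each of length at least $2$; write $T = P_1 \cup P_2 \cup P_3$ for this theta subgraph. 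My candidate N2C is $\{a,b\}$.

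To finish, I would show that the three interiors $\mathrm{int}(P_1),\mathrm{int}(P_2),\mathrm{int}(P_3)$ lie in three distinct components of $G-\{a,b\}$. Each such interior is nonempty (each $P_i$ has length at least $2$) and its first and last vertices are adjacent to $a$ and to $b$ respectively, so the component of $G-\{a,b\}$ containing it is a nontrivial bridge attaching to both $a$ and $b$. Three distinct such components give $c(G,ab)\ge 3$, i.e.\ $\{a,b\}$ is an N2C, as desired. So it suffices to rule out two of the interiors lying in a common component of $G-\{a,b\}$.

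The heart of the argument, and the step I expect to be the main obstacle, is deriving a $K_4$-minor from such a coincidence. Assuming, say, $\mathrm{int}(P_1)$ and $\mathrm{int}(P_2)$ lie in a single component of $G-\{a,b\}$, I would first extract a path $R$ in $G-\{a,b\}$ joining some $r_1 \in \mathrm{int}(P_i)$ to some $r_2 \in \mathrm{int}(P_j)$ with $i\ne j$ and with $R$ internally disjoint from $T$: take any connecting path and examine its successive vertices lying on $T$; since its two ends lie on different paths, some consecutive pair of $T$-vertices along it lies on two different paths $P_i,P_j$, and the subpath between them is the desired $R$. Relabelling so $\{i,j\}=\{1,2\}$, I would split $P_1$ and $P_2$ at $r_1$ and $r_2$ and build branch sets for a $K_4$ on $\{a,b,r_1,r_2\}$: place into $a$'s set the interiors of $P_3$ and of the $a$-ends of $P_1,P_2$ (yielding edges $ab$, $ar_1$, $ar_2$), into $b$'s set the interiors of the $b$-ends of $P_1,P_2$ (yielding $br_1$, $br_2$), and into $r_1$'s set the interior of $R$ (yielding $r_1r_2$), with $r_2$ forming its own set. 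These four sets are connected and pairwise disjoint precisely because the $P_k$ are internally disjoint and $R$ avoids $T$ internally, so they exhibit a $K_4$-minor, contradicting $K_4$-minor-freeness. The work here is essentially bookkeeping — verifying disjointness and connectedness of the branch sets, and noting the degenerate cases where a split segment has length $1$ (in which case the corresponding adjacency is realized by a direct edge) — but it is exactly where the hypothesis is used, so I would present it carefully.
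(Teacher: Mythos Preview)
Your proposal is correct and follows essentially the same approach as the paper: both argue the contrapositive via the Chartrand--Harary characterization, pass from a $K_{2,3}$-minor to a theta subgraph, and show that if two of the three path interiors share a component of $G-\{a,b\}$ then a connecting path together with the theta yields a $K_4$-minor. The only difference is cosmetic: you spell out the branch sets of the $K_4$-minor explicitly (and extract $R$ internally disjoint from the whole theta), whereas the paper simply chooses $P$ avoiding the third path and asserts that $N\cup P$ contains a $K_4$-minor.
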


 \begin{proof} A graph is outerplanar if and only if it is
$K_4$-minor-free and $K_{2,3}$-minor-free. So assume that $G$ has a
$K_{2,3}$ minor.  Since $K_{2,3}$ has maximum degree $3$, the existence
of a $K_{2,3}$ minor implies that $G$ contains a subdivision $N$ of
$K_{2,3}$, consisting of two vertices $s_1$, $s_2$ of degree $3$ and
three internally disjoint $s_1s_2$-paths of length at least
$2$.
 But now $\{s_1, s_2\}$ is an N2C of $N$ and hence, by Lemma
\ref{N2C-subgraph-k4mf}, an N2C of $G$, which is a contradiction.
 \end{proof}

\begin{LEM}\label{root-edge}
 For every non-isolated vertex $x$ of a graph $G$, and for every block
$B$ of $G$ that contains $x$, there is $xy \in E(B)$ such that
$c(B, xy) = c(G,xy)\le 1$.
 \end{LEM}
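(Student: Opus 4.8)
The plan is to reduce the lemma to a purely structural fact about the non-cutvertices of $B-x$, and then to locate the desired neighbour using a leaf of the block-cutvertex tree of $B-x$. First I would dispose of the equality: since any candidate edge $xy \in E(B)$ has both endpoints in the common block $B$, Observation~\ref{n2cblock} gives $c(B,xy)=c(G,xy)$ for free, so that half of the statement is automatic and I only need to produce an edge $xy \in E(B)$ with $c(B,xy)\le 1$. If $B=K_2$, say $B$ is the edge $xy$, then $B-\{x,y\}$ is empty and $c(B,xy)=0$, so the lemma holds trivially. Hence I may assume $B$ is $2$-connected, in which case $B-x$ is connected with at least two vertices.

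The key observation is that $B-\{x,y\}=(B-x)-y$ for every $y$. So if $y$ is a neighbour of $x$ in $B$ that happens to be a non-cutvertex of $B-x$, then $(B-x)-y$ is connected (or empty), i.e.\ $B-\{x,y\}$ has at most one component. Since $c(B,xy)$ counts only those nontrivial $\{x,y\}$-bridges attaching at both $x$ and $y$, we then get $c(B,xy)\le c(B-\{x,y\})\le 1$, exactly as needed. This reduces the whole lemma to the single claim that \emph{$x$ has a neighbour in $B$ that is a non-cutvertex of $B-x$}.

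I expect this claim to be the main obstacle, and I would prove it from the $2$-connectivity of $B$. Choose a leaf block $L$ of $B-x$, that is, a block that is a leaf of the block-cutvertex tree of $B-x$ (if $B-x$ is itself $2$-connected, take $L=B-x$, in which case there is no attaching cutvertex and the claim is immediate, since $x$ has all of its at least two neighbours in $B-x$). In general $L$ has a unique attaching cutvertex $c$, and every vertex of the nonempty set $V(L)\setminus\{c\}$ is a non-cutvertex of $B-x$. I would then argue that $x$ must have a neighbour in $V(L)\setminus\{c\}$: if not, the vertices of $V(L)\setminus\{c\}$ would be joined to the rest of $B$ only through $c$ — through $c$ within $B-x$ by the leaf-block property, and with no additional edges to $x$ by assumption — so deleting the single vertex $c$ would separate $V(L)\setminus\{c\}$ from $x$, contradicting that $B$ is $2$-connected. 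Any resulting neighbour $y\in V(L)\setminus\{c\}$ is then a non-cutvertex of $B-x$, and because $x$ and $y$ both lie in the block $B$ the edge $xy$ belongs to $E(B)$, which closes the argument.
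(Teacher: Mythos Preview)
Your proof is correct and takes a genuinely different route from the paper's. The paper argues by contradiction via a minimality argument: assuming $c(G,xy)\ge 2$ for every $xy\in E(B)$, it picks an $\{x,y_0\}$-bridge $D_0$ (attaching at both $x$ and $y_0$) of minimum order, then finds an edge $xy_1$ inside $D_0$ and a smaller $\{x,y_1\}$-bridge $D_1\subseteq D_0-y_0$, contradicting minimality. Your argument is instead structural: after reducing via Observation~\ref{n2cblock} and the inequality $c(B,xy)\le c(B-\{x,y\})$, you directly locate a neighbour of $x$ that is a non-cutvertex of $B-x$ by looking at a leaf block of the block-cutvertex tree of $B-x$ and using the $2$-connectivity of $B$. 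Your approach is arguably more transparent (it identifies exactly which $y$ works rather than reaching it by contradiction), while the paper's extremal argument avoids the extra reduction step through $c(B-\{x,y\})$ and works directly with the bridge count $c(G,xy)$. One small wording point: your parenthetical ``if $B-x$ is itself $2$-connected'' should really say ``if $B-x$ has no cutvertex'' so as to include the case $B-x\cong K_2$; the argument you give there covers that case without change.
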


 \begin{proof}
 For each $uv \in E(G)$, let $\AA(uv)$ be the set of
$\{u,v\}$-bridges of $G$ that attach at both $u$ and $v$.
 Suppose that $c(G, xy) \ge 2$ for every $xy \in E(B)$.
 Let $D_0 \in \AA(xy_0)$, $xy_0 \in E(B)$, attain the minimum
 $\min \{|V(D)| \;|\; D \in \AA(xy),\;  xy \in E(B)\}$.
 Let $xy_1$ be
an edge of $D_0$ incident with $x$.
 There is a path from $y_1$ to $y_0$ in $D_0-x$, so there is a cycle
containing $xy_0$ and $xy_1$, showing that $xy_1 \in E(B)$ also.
 Hence $c(G, xy_1) \ge 2$, and we can choose $D_1 \in \AA(xy_1)$ such
that $y_0 \notin V(D_1)$.  Now for each $z \in V(D_1) -
\{x, y_1\}$ there is a path in $D_1-x$ from $z$ to $y_1$; this path
avoids $y_0$ so it is also a path in $G-\{x,y_0\}$ from $z$ to $y_1 \in
V(D_0)$, showing that $z \in V(D_0)$ also.  Thus, $V(D_1) \subseteq
V(D_0)-\{y_0\}$, contradicting the minimality of $|V(D_0)|$.
 \end{proof}

\subsection{Trees in fundamental graphs}

We will first show the existence of an $f$-tree in a fundamental
$K_4$-minor-free graph, which will serve as a base case when
we find $f$-trees in general $K_4$-minor-free graphs by induction on
the number of vertices.

 The following fairly technical result is used to translate our weight
function from Theorem \ref{secondW} into spanning trees.
 Recall the definition of root edge, root block and root vertex from
Section \ref{sec-n2c}.
 Note that the special edges $\rr_0\ss_0, \rr_1\ss_1, \dots, \rr_k\ss_k$ below
always exist, by Lemma \ref{root-edge}.

 \begin{THM}\label{mainw2tree}
 Let $\FF$ be the set of all N2Cs in a connected fundamental
$K_4$-minor-free graph $G$ with $|V(G)| \ge 2$.
 For each $v \in V(G)$, let $\FF(v) = \{F \in \FF \;|\; v \in F\}$.
 Suppose there are $f : V(G) \to \mZ$ and a nonnegative integer function
$\omega$ on ordered pairs $(F,u)$ with $F\in \FF$ and $u\in F$ such that
 \begin{eqnarray}
 \omega(F, u)+\omega(F,v) &=& c(G,F)-2
	 \quad\hbox{for all $F = \{u,v\} \in \FF$, and}
	\label{wforf2}\\
 \sum_{F\in \FF(u)}\omega(F,u) + c(G-u) - 1 &\le&  f(u)-2
	 \quad\hbox{for all $u \in V(G)$.}
	\label{wforu2}
 \end{eqnarray}
 Choose a root edge $\rr_0\ss_0 \in E(G)$ such that $c(G, \rr_0\ss_0) \le 1$.
Let the blocks of $G$ be $B_0, B_1, B_2, \allowbreak\dots, B_k$, where
$\rr_0\ss_0\in E(B_0)$.
 For each $i = 1, 2, \dots, k$, let $\rr_i$ be the root vertex of $B_i$,
and choose $\rr_i\ss_i\in E(B_i)$ with $c(G,\rr_i\ss_i)\le 1$.

Then $G$ has a spanning tree $T$ such that $d_T(v)\le f(v)$ for all
$v\in V(G)$ and $d_T(v)\le f(v)-1$ for all $v\in \{\rr_0\} \cup \{\ss_0,
\ss_1, \dots, \ss_k\}$.
 Furthermore, for $0 \le i \le k$,
 \begin{equation}
   \rr_i\ss_i \in E(T) \quad\hbox{if $c(G,\rr_i\ss_i)=0$,}
 \qquad\hbox{and}\qquad
   \rr_i\ss_i \notin E(T) \quad\hbox{if $c(G,\rr_i\ss_i)=1$.}
 \label{rscond}
 \end{equation}
 \end{THM}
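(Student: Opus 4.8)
The plan is to prove Theorem \ref{mainw2tree} by induction on $|V(G)|$, distinguishing three structural cases that together exhaust all possibilities: (A) $G$ has a cutvertex; (B) $G$ is $2$-connected and has an N2C; and (C) $G$ is $2$-connected with no N2C. By Lemma \ref{k4-minor-free-no-N2C}, case (C) forces $G$ to be outerplanar, and by Observation \ref{n2cblock} every N2C lives inside a single block, which is what lets the block decomposition in case (A) cooperate with the N2C machinery. The base case is $G=K_2$: here $\FF=\emptyset$, the spanning tree is the edge $r_0s_0$ itself, both endpoints have degree $1\le f(\cdot)-1$, and $c(G,r_0s_0)=0$, so (\ref{rscond}) holds. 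Throughout, I think of $T$ as rooted at the root edge, and I maintain as the key invariant that $r_0$ and all the $s_i$ receive the stronger degree bound $f(\cdot)-1$, since these are precisely the vertices where the construction can avoid spending one unit of degree.

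In case (C) I would exploit outerplanarity directly. A $2$-connected outerplanar graph on at least three vertices has a Hamiltonian cycle (its outer boundary), and since chords of the outer cycle cannot cross, every chord $uv$ satisfies $c(G,uv)=2$ while every outer edge $uv$ satisfies $c(G,uv)=1$; hence the chosen root edge $r_0s_0$, having $c(G,r_0s_0)\le 1$, must be an outer edge. Deleting it from the Hamiltonian cycle yields a Hamiltonian path with endpoints $r_0,s_0$, giving every vertex degree at most $2\le f(\cdot)$, the endpoints degree $1\le f(\cdot)-1$, and $r_0s_0\notin E(T)$ as required by (\ref{rscond}) since $c(G,r_0s_0)=1$. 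Here $k=0$, so there are no further $s_i$ to worry about.

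In case (A) I would split $G$ at a cutvertex into two connected subgraphs $G_1,G_2$ meeting only in that cutvertex $c$, each with fewer vertices. The quantities in (\ref{wforu2}) split cleanly across the two sides: $c(G-c)=c(G_1-c)+c(G_2-c)$, and by Observation \ref{n2cblock} each N2C of $G$, and hence each term $\omega(F,c)$, belongs to exactly one side. I would therefore partition $f(c)$ into $f_1(c)+f_2(c)$ so that (\ref{wforu2}) continues to hold on each $G_i$, choose root edges for the two pieces via Lemma \ref{root-edge} (one piece inherits $r_0s_0$; the other takes $c$ as its new root vertex), apply induction, and glue the two spanning trees at $c$. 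The degrees add, $d_T(c)=d_{T_1}(c)+d_{T_2}(c)\le f(c)$, and the interface conditions (\ref{rscond}) together with the $s_i$-bounds are exactly what make the glued tree satisfy the conclusion; the bulk of the work is the bookkeeping that verifies the hypotheses transfer to each $G_i$.

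Case (B) is the crux. Given an N2C $F=\{u,v\}$ with bridges $D_1,\dots,D_m$ (where $m=c(G,F)\ge 3$), note that within each $D_i$ the pair $\{u,v\}$ is no longer an N2C, since $D_i-\{u,v\}$ is a single component of $G-\{u,v\}$ and hence connected; this is the reduction that drives the induction. Using $\omega(F,u)+\omega(F,v)=m-2$ from (\ref{wforf2}), I would designate one bridge to carry the $u$--$v$ connection (``joined''), and distribute the remaining bridges as ``split'' so that the internal vertices are charged to $u$ in $\omega(F,u)$ of them and to $v$ in $\omega(F,v)$ of them; recursing into each $D_i$ with $\omega$ restricted to the N2Cs it contains and with $f'(u),f'(v)$ reduced to reflect exactly the local share of $u$'s and $v$'s budgets, one then glues the bridge-trees at $u$ and $v$. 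The main obstacle is the degree accounting at $u$ and $v$: because exactly one ``main'' bridge carries the connection and a second ``main'' bridge must still be split, one of $u,v$ unavoidably incurs one extra incident edge, so its degree reaches $f(\cdot)$ rather than $f(\cdot)-1$. This single unit of slack --- precisely the gap between the bound $\sum_{F\in\FF(u)}\omega(F,u)+c(G-u)\le f(u)-1$ rearranged from (\ref{wforu2}) and the target $d_T(u)\le f(u)$ --- must be spent by each vertex at most once, and it is the rooting that enforces this, with the vertices $r_0$ and the $s_i$ never spending it: exactly the asymmetry recorded in the conclusion. Verifying that the reduced functions $f'$ and $\omega'$ still satisfy (\ref{wforf2})--(\ref{wforu2}) on every bridge, and that the recombined tree meets (\ref{rscond}) and the $s_i$-bounds globally, is the technical heart of the argument; Proposition \ref{com_counting} and Lemma \ref{SP-tree2} supply the component-counting facts that keep this allocation feasible.
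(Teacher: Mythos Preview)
Your overall strategy matches the paper's, but your N2C reduction in Case~(B) has a genuine off-by-one gap. The paper does not split at an N2C $\{x,y\}$ into all its bridges. It peels off a \emph{single} bridge $D$ (chosen to avoid $r_0s_0$ and $r_ms_m$), setting $G_2=D+xy$ with root edge $xy$ and keeping everything else together as $G_1$ with the original root edge; the pair $\{x,y\}$ may still be an N2C of $G_1$, and one sets $\omega_1(\{x,y\},x)=\omega(\{x,y\},x)-1$, so the remaining weight is carried forward into the induction on $G_1$. (The paper also handles the no-N2C case for all blocks at once, directly building the tree as a union of Hamiltonian paths in the blocks; this absorbs your Cases~(A) and~(C) without any cutvertex induction.)

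Your full split cannot close the degree accounting at $u$ and $v$. The bridge $D_1$ containing $r_0s_0$ must take $r_0s_0$ as its root edge (otherwise you lose the bound at $r_0,s_0$), so you only get $d_{T_1}(u)\le f_1(u)$ there rather than $f_1(u)-1$; for each other $D_i+uv$ with root edge $uv$ you get $d_{T_i}(u),d_{T_i}(v)\le f_i(\cdot)-1$. With the natural choices $f_i(u)=\sum_{F\in\FF_i(u)}\omega(F,u)+2$, gluing the trees and deleting $m-1$ path-edges ($p$ incident with $u$, $q$ with $v$, $p+q=m-1$) together with (\ref{wforu2}) forces $p\ge m-1-\omega(\{u,v\},u)$ for $d_T(u)\le f(u)$ and $q\ge m-1-\omega(\{u,v\},v)$ for $d_T(v)\le f(v)$; summing gives $p+q\ge 2(m-1)-(m-2)=m>m-1$. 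Hence one of $u,v$ lands at $f(\cdot)+1$, not $f(\cdot)$ as you assert. The only rescue is when $uv\in E(T_1)$, since deleting that artificial edge saves a unit at \emph{each} of $u,v$; but the induction hypothesis gives no such control, because $uv$ is not among the $r_is_i$ of $D_1+uv$. The paper's one-bridge peel avoids this precisely by never separating the last two $\{x,y\}$-bridges from one another. Finally, Proposition~\ref{com_counting} plays no role here; it is used only in Theorem~\ref{secondW} to produce $\omega$, whereas in this theorem $\omega$ is already given as a hypothesis.
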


 \begin{proof}
 The proof is by induction on $|V(G)|$.  For the basis, if $|V(G)| = 2$
then $G=K_2$, $\FF$ is empty, conditions (\ref{wforf2}) and
(\ref{wforu2}) are trivially satisfied, $\rr_0\ss_0$ is the single edge, and
we can take $T=G$.  So we may assume that $|V(G)| \ge 3$.  There are two
cases.

 \smallskip\noindent
 \textbf{Case 1:} $G$ has no N2C.

 By Observation \ref{n2cblock} and Lemma \ref{k4-minor-free-no-N2C} each
block of $G$ is outerplanar and $c(G, \rr_i\ss_i) = c(B_i, \rr_i\ss_i) \le 1$
for $i = 0, 1, \dots, k$.  If $c(G, \rr_i\ss_i) = 0$ then $\rr_i \ss_i$ is a
cutedge of $G$, so $B_i = \rr_i\ss_i$, and we take $T_i=B_i$.  Otherwise,
$c(G, \rr_i\ss_i) = c(B_i, \rr_i \ss_i) = 1$, so $B_i$ contains a cycle using
$\rr_i \ss_i$.  Thus, $B_i$ is a $2$-connected outerplanar graph, which we
may embed in the plane with a hamiltonian cycle $C_i$ as its outer face.
 Since $c(B_i, \rr_i\ss_i) = 1$, $\rr_i\ss_i \in E(C_i)$, so we take $T_i =
C_i - \rr_i\ss_i$.  In either case, $T_i$ is a hamiltonian path and a
spanning tree in $B_i$, and hence $T = \bigcup_{i=0}^k T_i$ is a
spanning tree of $G$.

 Let us examine degrees in $T$.
 Since $G$ has no N2C, $\FF = \emptyset$, and inequality (\ref{wforu2})
just says that $c(G-v)-1\le f(v)-2$ for every $v\in V(G)$.
 Suppose first that $u \in V(G) - \{\rr_0, \ss_0, \rr_1, \ss_1,\dots, \rr_k,
\ss_k\}$.
 The cutvertices of $G$ are $\rr_1, \rr_2, \dots, \rr_k$, so $u$ is not a
cutvertex of $G$.  Hence $u$ lies in a single block, so by construction
of $T$, $d_T(u) = 2 \le f(u)$, as required.
 Suppose next that
 $u\in\{\rr_1,
\dots, \rr_k\}-(\{\rr_0\} \cup \{\ss_0, \ss_1, \dots, \ss_k\})$.
 By construction of $T$, $u$ has two incident edges of $T$ from its
parent block, and one incident edge from each child block, so $d_T(u) =
2 + (c(G-u)-1) \le f(u)$, as required.
 Finally, suppose that $u \in \{\rr_0\} \cup \{\ss_0, \ss_1, \dots, \ss_k\}$.
Then $u$ has one incident edge of $T$ from each block to which it
belongs, so $d_T(u)=c(G-u)\le f(u)-1$, as required.


Furthermore, by construction of $T$, for $0 \le i \le k$ we have
$\rr_i\ss_i\in E(T)$ if $c(G,\rr_i\ss_i)=0$ and $\rr_i\ss_i\not\in E(T)$ if
$c(G,\rr_i\ss_i)=1$.

 \smallskip\noindent
 \textbf{Case 2:} $G$ contains an N2C.

 Let $\xy$ be an N2C of $G$, with $\xy$ contained in a block $B_m$.
Since $c(G, xy) \ge 3$, we can choose a nontrivial $\xy$-bridge $D$ that
attaches at both $x$ and $y$ so that $\rr_0 \ss_0, \rr_m \ss_m \notin
E(D)$.
 Let $G_1=G-(V(D)-\xy)$, so $G_1$ is the union of all $\xy$-bridges
other than $D$, including all bridges that attach only at $x$ or only at
$y$.  We may assume that the blocks of $G_1$ are $B_0, B_1, \dots,
B_{m-1}$ and the new block $B_m'$ consisting of all $\xy$-bridges of
$B_m$ other than $D \cap B_m$.
 Let $G_2 = D \cup xay$ where $a$ is a new
vertex; $G_2$ is a minor of $G$, so it is $K_4$-minor-free.
 Neither $x$ nor $y$ is a cutvertex of $G_2$, and $\xy$ is not an
N2C of $G_2$ since $c(G_2, xy)=2$.
 The blocks of $G_2$ are all the blocks of $G$ included in $D$, which
are $B_{m+1}, B_{m+2}, \dots, B_k$, and the new block $B_m'' = (D \cap B_m) \cup xay$.  No edge of
$B_{m+1}, B_{m+2}, \dots, B_k$ is incident with $x$ or $y$.

 We can apply Observations \ref{n2cblock} and \ref{n2cbridge}, breaking
$G_1$, $G_2$ and $G$ up into blocks and into $\xy$-bridges in $B_m'$
and $B_m''$, which are both subgraphs of
 $B_m \cup xay$.  It follows that
$G_1$ and $G_2$ are both fundamental.  Also, if $\FF_j$, $j = 1$ or $2$,
is the set of N2Cs of $G_j$, then $\FF_1$ and $\FF_2$ are disjoint and $\FF
= \FF_1 \cup \FF_2 \cup \{\xy\}$.  Possibly $\xy \in \FF_1$,
but $\xy \notin \FF_2$.  If $\uv \in \FF_j$ and $\uv \ne \xy$ then
$c(G,uv) = c(G_j, uv)$.

 As $\xy$ is an N2C, $\omega(\xy,x)\ge1$ or
$\omega(\xy,y)\ge1$; without loss of generality, assume that
$\omega(\xy,x)\ge1$.

 For $F \in \FF_1 \cup \{\xy\}$ and $u
\in F$, recall that $\de_x(x)=1$ and $\de_x(y)=0$, and define
 $$\omega_1(F, u) = \begin{cases}
	\omega(\xy, u)-\de_x(u) & \hbox{if $F=\xy$,}\\
	\omega(F, u) & \hbox{otherwise.}
  \end{cases}
 $$
 Then $\omega_1(F, u) \ge 0$ whenever it is defined.
 For $F \in \FF_2$ and $u \in F$ define $\omega_2(F, u) = \omega(F, u)$.

 For $u \in V(G_1)$ define
 $$
 f_1(u) = \begin{cases}
	f(u)-\de_x(u)-\ffts{u} & \hbox{if $u \in \xy$,}\\
	f(u)  & \hbox{otherwise.}
 \end{cases}
 $$

 For $u \in V(G_2)$ define
 $$
 f_2(u) = \begin{cases}
	\ffts{u} + 2 & \hbox{if $u \in \xy$,}\\
	2 & \hbox{if $u = a$,} \\
	f(u)  & \hbox{otherwise.}
 \end{cases}
 $$

 \begin{CLAu}
 For $j=1$ and $2$, (\ref{wforf2}) and (\ref{wforu2}) hold for $f_j$ and
$\omega_j$ in $G_j$.
 \end{CLAu}

 \begin{claimproof}
 If $F \in \FF_j$ and $F \ne \xy$, then (\ref{wforf2}) for $F$ in
$G_j$ is just a rewriting of (\ref{wforf2}) for $F$ in $G$.
 If $u \in V(G_j)$ and $u \notin \{x, y, a\}$ then
(\ref{wforu2}) for $u$ in $G_j$ is just a rewriting of (\ref{wforu2})
for $u$ in $G$.

 If $F=\xy \in \FF_1$ then (\ref{wforf2}) holds for $\xy$ in $G_1$
because
 $$\omega_1(F,x)+\omega_1(F,y) = \omega(F,x)+\omega(F,y)-1 \le
 c(G,F)-2-1 = c(G_1, F)-2 .$$

 Suppose $u \in \xy$.  If $\xy \in \FF_1$ then $\FF(u) =
\FF_1(u) \cup \FF_2(u)$ and (\ref{wforu2}) for $u$ in $G$ can be
rewritten as
 $$\fft11u + \de_x(u) + \ffts u + c(G_1-u)-1 \le f_1(u) + \de_x(u) +
\ffts u - 2$$
 which gives (\ref{wforu2}) for $u$ in $G_1$ after cancelling
$\de_x(u) + \ffts u$ on both sides.
 If $\xy \notin \FF_1$ then $\xy$ is included in $\FF(u)$ but
not in $\FF_1(u)$, so this inequality is modified by adding a
nonnegative term $\omega_1(\xy,u)$ on the left, which can be deleted, so
(\ref{wforu2}) still holds for $u$ in $G_1$.
 Also, (\ref{wforu2}) holds (with equality) for $u$ in $G_2$ by
definition of $f_2(u)$ and because $\omega_2$ is just a restriction of
$\omega$ and $c(G_2,u)=1$.

 For $u = a$, $\FF_2(a) = \emptyset$, $c(G_2-a)=1$, and $f_2(a)=2$,
so (\ref{wforu2}) holds for $a$ in $G_2$.
 \end{claimproof}

 In $G_1$ we can still choose $\rr_0 \ss_0$ as the root edge, and $\rr_1
\ss_1, \rr_2 \ss_2, \dots \rr_m \ss_m$ as the other special edges (where
$\rr_m \ss_m$ is now in $B_m'$ instead of $B_m$).  By the claim we can
apply induction to find a spanning tree $T_1$.
 In $G_2$ we choose $ax \in E(B_m'')$ as the root edge, and we can still
choose $\rr_{m+1} \ss_{m+1}, \dots, \rr_k \ss_k$ as the other special
edges.  By the claim we can apply induction to find a spanning tree
$T_2$.  Since $c(G_2,ax)=1$, we have $ax \notin E(T_2)$ by
(\ref{rscond}), so $ay \in E(T_2)$.  Thus, $T_2' = T_2-ay$ is a spanning
tree of $D = G_2-a$.

 Now $T_1$ and $T_2'$ both contain $xy$-paths.
 So $T_1 \cup T_2'$ is a spanning subgraph of $G$ containing a single
cycle $C$ through $x$ and $y$.  Let $yz$ be the edge of $C \cap
G_2$ incident with $y$.  Then $T = (T_1 \cup T_2') - yz$ is a spanning
tree of $G$.

 For all $v \in V(G)$, define $\sigma(v)$ to be $1$ if $v \in \{r_0\}
\cup \{s_0, s_1, \dots, s_k\}$, and $0$ otherwise.  If $u \in V(G) -
\xy$, then $u \in V(G_j)-\xy$ for $j = 1$ or $2$, and $d_T(u) \le
d_{T_j}(u) \le f_j(u) - \sigma(u) = f(u) - \sigma(u)$ as required.

 We also want to show that $d_T(u) \le f(u) - \sigma(u)$ when $u \in
\xy$.
 We know that no edge of $B_{m+1}, B_{m+2}, \dots, B_k$ is incident with
$u$, so if $\sigma(u)=1$, it is because $u$ is incident with
an edge $\rr_i \ss_i$ for $0 \le i \le m$; this edge belongs to $G_1$.
 Thus, $d_{T_1}(u) \le f_1(u)-\sigma(u)$ by construction of $T_1$.
 Since $ax$ is the root edge of $G_2$, $d_{T_2'}(x) =
d_{T_2}(x) \le f_2(x)-1$, so
 $$d_T(x) = d_{T_1}(x) + d_{T_2'}(x) \le (f_1(x)-\sigma(x))+(f_2(x)-1)
	 = f(x)+1-\de_x(x)-\sigma(x) = f(x)-\sigma(x).$$
 Since we delete two edges $ay$ and $yz$ from $T_1 \cup T_2$
when forming $T$,
 $$d_T(y) = d_{T_1}(y) + d_{T_2}(y)-2 \le (f_1(y)-\sigma(y)) +
	f_2(y) - 2 =
	f(y)-\de_x(y)-\sigma(y) = f(y)-\sigma(y).$$

 Finally we verify (\ref{rscond}).
 For each $i$, $0 \le i \le k$, $\rr_i \ss_i \in E(G_j)$ for a unique $j
\in \{1,2\}$.
 If $c(G, \rr_i\ss_i)=0$ then $\rr_i\ss_i$ is a cutedge of $G$ and so
must belong to the spanning tree $T$.
 Otherwise, $c(G,\rr_i\ss_i) = c(G_j,\rr_i\ss_i)=1$ and so $\rr_i\ss_i
\notin E(T_j)$, and hence $\rr_i \ss_i \notin E(T)$.
 \end{proof}

 Combining Theorems \ref{secondW} and \ref{mainw2tree} we obtain our
result on spanning trees in fundamental $K_4$-minor-free graphs.  The
hypotheses (\ref{wforf2}) and (\ref{wforu2}) of Theorem \ref{mainw2tree}
are just slightly rewritten versions of the conclusions (\ref{wforf})
and (\ref{wforu}) of Theorem \ref{secondW}.
 Note that Theorem \ref{mainw2tree} does not handle the case where
$|V(G)|=1$, but that is trivial.

 \begin{THM}\label{maintreefund}
 Let $G$ be a fundamental connected $K_4$-minor-free graph.
 Suppose that $f : V(G) \to \mZ$ satisfies

 \begin{equation*}
  c(G-S)\;\;\le\;\; \sum_{v\in S}(f(v)-1)
	\quad\hbox{for all $S \subseteq V(G)$ with $S \ne \emptyset$.}
	\tag{\ref{TS}}
 \end{equation*}
 Choose a root edge $\rr_0\ss_0 \in E(G)$ such that $c(G, \rr_0\ss_0) \le 1$.
Let the blocks of $G$ be $B_0, B_1, B_2, \allowbreak\dots, B_k$, where
$\rr_0\ss_0\in E(B_0)$.
 For each $i = 1, 2, \dots, k$, let $\rr_i$ be the root vertex of $B_i$,
and choose $\rr_i\ss_i\in E(B_i)$ with $c(G,\rr_i\ss_i)\le 1$.

Then $G$ has a spanning tree $T$ such that $d_T(v)\le f(v)$ for all
$v\in V(G)$ and $d_T(v)\le f(v)-1$ for all $v\in \{\rr_0\} \cup \{\ss_0,
\ss_1, \dots, \ss_k\}$.
 Furthermore, for $0 \le i \le k$,
   $\rr_i\ss_i \in E(T)$ if $c(G,\rr_i\ss_i)=0$,
  and
   $\rr_i\ss_i \notin E(T)$ if $c(G,\rr_i\ss_i)=1$.
 \end{THM}

This theorem is quite technical, and only applies to fundamental
$K_4$-minor-free graphs, but it gives strong results in that situation,
particularly for graphs with many cutedges.
 For example, neither the general result Theorem \ref{ENV-ftree} nor our
main result Theorem \ref{k4mf-ftree} can prove that when $G$ is a tree,
$G$ has a spanning $f$-tree where $f=d_G$ is just the degree function in
$G$.
 However, we can get this result from Theorem \ref{maintreefund}
with $f(v) = d_G(v)+1$ for all $v$, $\rr_0 \ss_0$ an arbitrary
edge, and $\{\ss_1, \ss_2, \dots, \ss_k\} = V(G) - \{\rr _0,
\ss_0\}$.

\subsection{Trees
in general graphs}
	
	In this subsection, we find spanning $f$-trees in general
$K_4$-minor-free graphs.  We will use the Component Condition
(\ref{TS}), as stated in Theorem \ref{k4mf-ftree}, often.

	\begin{LEM}\label{LEM:2-cut-no-uv}
 Let $G$ be a 2-connected graph and $f : V(G) \to \mZ$.
 If  $G$ and $f$ satisfy (\ref{TS}),
then for every 2-cut $\{u,v\}$ of $G$ with $uv\in E(G)$,
$G-uv$ and $f$ also satisfy
(\ref{TS}).
	\end{LEM}
	
	\begin{proof}

 Let $G'=G-uv$, and let $S\subseteq V(G')$ with $S \ne \emptyset$.
 If $\uv \cap S \ne \emptyset$ then $G'-S = G-S$, and if $u$ and $v$ are
in the same component of $G'-S$ then $c(G'-S)=c((G'-S) \cup uv) = c(G-S)$; in either case
(\ref{TS}) holds for $G'$, $f$ and $S$.
 So we may assume that $u, v \notin S$ and $u$ and $v$ are in distinct
components of $G'-S$.

 Let $D_1, D_2, \dots, D_t$ be the $\uv$-bridges in $G'$, and let $S_i =
S \cap V(D_i)$ for each $i$.  Then $S = \bigcup_{i=1}^t S_i$. As $\uv$
is a $2$-cut, $t \ge 2$.  Now $S_i$ must separate $u$ and $v$ in $D_i$,
so the set of components of $D_i-S_i$ can be written as $\AA_i \cup
\{C_i^u, C_i^v\}$ where $u \in V(C_i^u)$, $v \in V(C_i^v)$, and $\AA_i$
contains all other components.  The set of components of $G-S_i$ is then
$\AA_i \cup  \{C_i^u \cup C_i^v \cup \bigcup_{j \ne i} D_j\}$, so
$c(G-S_i) = |\AA_i|+1$.  The set of components of $G'-S$ is
 $\bigcup_{i=1}^t \AA_i \cup
 \{\bigcup_{i=1}^t C_i^u, \bigcup_{i=1}^t C_i^v\}$.
   Since $t \ge 2$ and (\ref{TS}) holds for
$G$ and $f$, we have
 \begin{eqnarray*}
  c(G'-S) &=& \sum_{i=1}^t |\AA_i| + 2 \;=\;
	\sum_{i=1}^t (c(G-S_i)-1) + 2 \\
  &\le& \sum_{i=1}^t \sum_{v \in S_i} (f(v)-1) - t + 2
	= \sum_{v \in	S} (f(v)-1) -t + 2
	\le \sum_{v \in	S} (f(v)-1)
 \end{eqnarray*}
 and so (\ref{TS}) holds for $G'$, $f$ and $S$.
 \end{proof}
	
 If every $2$-cut $\{u,v\}$ of a graph $G$ satisfies $uv \notin E(G)$,
we say that $G$ is \emph{$2$-cut-reduced}.
 If $G$ is $2$-connected and $\uv$ is a $2$-cut of $G$ with $uv \in
E(G)$, then $G-uv$ is still $2$-connected.  Therefore, we can apply
Lemma \ref{LEM:2-cut-no-uv} repeatedly.  We may create new $2$-cuts when
we do, but we can only create a bounded number, so we obtain the
following.

 \begin{COR}\label{all-2-cut-no-uv}
 Let $G$ be a 2-connected graph and $f : V(G) \to \mZ$.
 Assume that $G$ and $f$ satisfy (\ref{TS}).  Then $G$ has
a $2$-cut-reduced spanning subgraph $G'$ such that $G'$ and $f$ also
satisfy (\ref{TS}).
 \end{COR}

 We write a $uv$-path $P$ as $P[u,v]$ to emphasize its endvertices.
 Suppose the block-cutvertex tree of a graph $H$ is a path.
 Then we write $H=v_0B_1v_1B_2v_2\dots v_{t-1}B_tv_t$, where
 each $B_i$ is a block of $H$,
 $v_0 \in V(B_1) -\{v_1\}$, $v_t \in V(B_t) - \{v_{t-1}\}$,
 and each $v_i$, $i=1,\dots, t-1$, is a cutvertex of $H$ with $v_i\in
V(B_i)\cap V(B_{i+1})$.
 (If $H$ has only one vertex we take $t=0$ so that $H = v_0$.)
 We say $H$ is a \emph{chain of blocks from $v_0$ to $v_t$}. If $t \ge
2$ we say the chain of blocks is \emph{nontrivial}.

 \begin{LEM}\label{edge-deletion}
 Let $G$ be a $2$-connected graph and $xy \in E(G)$.

 \noindent
 \textnormal{(a)} Then $G-xy$ is a chain of blocks from $x$ to $y$.

 \noindent
 \textnormal{(b)} Moreover, if $G$ is a $2$-cut-reduced $K_4$-minor-free graph, then
$G-xy$ is a nontrivial chain of blocks.
	\end{LEM}
	
	\begin{proof}
 (a) Since $G$ is $2$-connected, if $G-xy$ has a cutvertex
then every leaf block of $G-xy$ contains $x$ or $y$ as a non-cutvertex
vertex.
 Hence, $G-xy$ is either a single block or has only two leaf blocks, and
is a chain of blocks from $x$ to $y$.

 (b) If $G-xy$ is 2-connected, then there are internally disjoint $xy$-paths
$P_1$ and $P_2$ in $G-xy$.
 Since $G$ is $2$-cut-reduced, $\{x,y\}$ is not a 2-cut of $G$, so there
is a path $P$ in $G-\{x,y\}$ connecting $P_1$ to $P_2$, where we may
assume that $|V(P)\cap V(P_i)|=1$, $i=1,2$.
 Then $P_1\cup P_2\cup P\cup xy$
 contains a $K_4$ minor, which is a
contradiction.  Hence $G-xy$ is not $2$-connected, so it is a nontrivial
chain of blocks.
 \end{proof}

 If $H$ has at least two blocks, $B$ is a leaf block of
$H$, and the cutvertex of $H$ in $B$ is $v$, then $H \ominus
B$ denotes $H - (V(B) - \{v\})$.

 \begin{LEM}\label{fund-subgraph}
 Let $G$ be a $2$-cut-reduced $2$-connected $K_4$-minor-free graph.
 If $G$ is not fundamental and $u\in V(G)$, then there exist $xy\in
E(G)$ and a leaf block $B$ of $G-xy$ such that each of $x$ and $y$ is contained in an N2C of $G$, $B$ is
fundamental, and $u \in V((G-xy) \ominus B)$.
 \end{LEM}
	
 \begin{proof}
 Since $G$ is not fundamental there is at least one edge $xy \in E(G)$
such that both $x$ and $y$ are contained in N2Cs of $G$.
 By Lemma \ref{edge-deletion}, $G-xy$ is a nontrivial chain of blocks
from $x$ to $y$.  For such an $xy$ there is at least one leaf
block $B$ of $G-xy$ so that $u \in V((G-xy) \ominus B)$.  Choose such an
$xy$ and such a $B$ so that $|V((G-xy) \ominus B)|$ is as large as
possible.
 We may assume that $G-xy = v_0 B_1 v_1 \dots v_{t-1} B_t v_t$ where
$x=v_0$, $y = v_t$ and $B=B_1$.

 We claim that $B_1$ is fundamental.  Suppose not.
 Then there is $x_1 y_1 \in E(B_1)$ with both $x_1$ and $y_1$ in N2Cs of
$B_1$.
 By Lemma \ref{N2C-subgraph-k4mf}, both $x_1$ and $y_1$ are also in N2Cs
of $G$.
 Since $B_1$ has N2Cs it has at least five vertices and is
$2$-connected.
 By Lemma \ref{edge-deletion}(a), $B_1-x_1y_1$ is a chain of blocks
$w_0 D_1 w_1 \dots w_{s-1} D_s w_s$ (possibly $s=1$, but $s \ne 0$).
 Choose $h$ and $k$ such that $h \le k$, $x$ and $v_1$
are vertices of $w_h D_{h+1} w_{h+1} \dots w_{k-1} D_k w_k$, and $k-h$
is as small as possible.  Since $x \ne v_1$ we have $h < k$.
 We may assume that $x_1 = w_0$, $y_1 = w_s$, $x \in D_{h+1} -
\{w_{h+1}\}$ and $v_1 \in D_k - \{w_{k-1}\}$.

By Lemma \ref{edge-deletion}, $G-x_1 y_1$ is a nontrivial chain of
blocks, which must be
 $w_0 D_1 \dots \allowbreak w_{h-1} D_h w_h
D^* w_k D_{k+1} \dots \allowbreak D_s w_s$
 where $D^*$ is a block
  $(x D_{h+1} w_{h+1} D_{h+2} \dots\allowbreak w_{k-1} D_k v_1 B_2
v_2 \dots\allowbreak v_{t-1} B_t y) \allowbreak \cup \allowbreak xy$.
 Since $G-x_1y_1$ is nontrivial, either $h > 0$ or $k < s$ (hence $s
\ge 2$).  If $h > 0$ then $D_1$ is a leaf block of $G-x_1 y_1$, $u \in
V((G-xy) \ominus B_1) = V(B_2 \cup \dots \cup B_t) \subset V(D^*)
\subseteq V((G-x_1y_1) \ominus D_1)$ and $|V((G-x_1y_1) \ominus D_1)|
\ge |V(D^*)| > |V((G-xy) \ominus B_1)|$.  Thus, $x_1 y_1$ and $D_1$
contradict the choice of $xy$ and $B=B_1$.  We obtain a similar
contradiction from $x_1 y_1$ and $D_s$ if $k < s$.

 Hence, $B=B_1$ is fundamental, as claimed, which proves the lemma.
 \end{proof}

We are now ready to prove Theorem~\ref{k4mf-ftree}, which we restate for
convenience.

 \begin{THM-k4mf-ftree}
 \mainhypothesis
 \begin{equation*}
 c(G-S) \;\;\le\;\; \sum_{v\in S}(f(v)-1)
	\quad\hbox{for all $S\subseteq V(G)$ with $S\ne \emptyset$.}
	\tag{\ref{TS}}
 \end{equation*}
 \mainconclusion
 \end{THM-k4mf-ftree}

\begin{proof}
 The proof is by induction on $|V(G)|$.
 The conclusion is true by Theorem
\ref{maintreefund} if $|V(G)| \le 4$, since $G$ has no N2Cs and is
fundamental. So we assume that $|V(G)|\ge 5$.
 We consider two cases.

 \smallskip\noindent
\textbf{Case 1:} $G$ has a cutvertex $x$.

 Let $G_1$ be one $x$-bridge, and let $G_2$
be the union of the remaining $x$-bridges.
 If $\zz \in V(G_1)-\{x\}$ let $\zz_1=\zz$ and $\zz_2=x$, if $\zz \in
V(G_2)-\{x\}$ let $\zz_1=x$ and $\zz_2=\zz$, and if $\zz=x$ let
$\zz_1=\zz_2=x$.
 For every $v \in V(G_i)-\{x\}$, $i=1,2$, let $f_i(v)=f(v)$.
 Then each $G_i$ and $f_i$, $i = 1$ or $2$, satisfy (\ref{TS})
for all $S$ that do not contain $x$.

 Let $f_2(x)$ be the minimum integer such that $G_2$ and $f_2$ satisfy
(\ref{TS}) for all $S$ containing $x$, and let $f_1(x)=f(x)-f_2(x)+1$.
 We claim that $G_1$ and $f_1$ also satisfy (\ref{TS}) for sets $S$
containing $x$.  If not, there is $U \subseteq V(G_1)$ with $x \in U$ so
that
 $$c(G_1-U) \ge \sum_{v\in U}(f_1(v)-1) + 1
	= \sum_{v \in U-\xx} (f(v)-1) + f_1(x). $$
 By minimality of $f_2(x)$ there is $W \subseteq V(G_2)$ with $x \in W$
so that
 $$c(G_2-W) = \sum_{v \in W} (f_2(v)-1)
	= \sum_{v \in W-\xx} (f(v)-1) + f_2(x)-1. $$
 Thus, if $S = U \cup W$ we have
 \begin{eqnarray*}
 c(G-S) &=& c(G_1-U) + c(G_2-W)
	\;=\; \sum_{v \in S-\xx} (f(v)-1) + f_1(x)+f_2(x)-1 \\
	&=& \sum_{v \in S-\xx} (f(v)-1) + f(x)
	\;>\; \sum_{v \in S} (f(v)-1), \\
 \end{eqnarray*}
 contradicting (\ref{TS}) for $G$ and $f$.

 Thus, for each $i=1,2$, $G_i$ and $f_i$ satisfy (\ref{TS}), so by
induction there is a spanning $f_i$-tree $T_i$ of $G_i$ with
$d_{T_i}(\zz_i) \le f_i(\zz_i)-1$.  Let $T = T_1 \cup T_2$, which is a
spanning tree of $G$.
 If $v \notin \{\zz,x\}$ then $v \in V(G_i)-\xx$ for some $i$, and
$d_T(v) = d_{T_i}(v) \le f_i(v) = f(v)$.
 If $z \in V(G_i)-\xx$ then $d_T(\zz) = d_{T_i}(\zz) \le f_i(\zz)-1 =
f(\zz)-1$ since $\zz=\zz_i$, and $d_T(x) = d_{T_1}(x)+d_{T_2}(x)\le
f_1(x)+f_2(x)-1 = f(x)$ since $x=\zz_{3-i}$.  If $\zz = x$ then $d_T(x)
\le (f_1(x)-1)+(f_2(x)-1) = f(x)-1$ since $x = \zz_1 = \zz_2$.  Thus,
$T$ is a spanning $f$-tree with $d_T(\zz) \le f(\zz)-1$.

 \smallskip\noindent \textbf{Case 2:} $G$ has no cutvertex; since
$|V(G)| \ge 5$, $G$ is $2$-connected.

 We may assume that $G$ is not fundamental, by Theorem
\ref{maintreefund}, and that $G$ is $2$-cut-reduced, by Corollary
\ref{all-2-cut-no-uv}. By Lemmas \ref{edge-deletion} and
\ref{fund-subgraph}, $G$ contains an edge $xy$ with both $x$ and $y$ in
N2Cs of $G$ such that some leaf block $B$ of $G-xy$, which is a
nontrivial chain of blocks from $x$ to $y$, is fundamental and $\zz\in
V((G-xy)\ominus B)$. Without loss of generality $x \in V(B)$ and so
$G-xy=xB_1v_1B_2v_2\dots B_ty$ where $t \ge 2$ and $B=B_1$.  Write $L_2
= (G - xy) \ominus B_1 = v_1 B_2 v_2 \dots v_{t-1} B_t v_t$, so
that $z \in V(L_2)$.
 Since $x$ and $y$ are in N2Cs, $d_G(x), d_G(y) \ge 3$ and hence
$|V(B_1)|, |V(L_2)| \ge 3$.

 Let $G_1 = B_1 \cup xa_1v_1$ where
$a_1$ is a new vertex, and let $G_2 = L_2
\cup v_1y$.  Both $G_1$ and $G_2$ are minors of $G$ and hence
$K_4$-minor-free.
 For $i = 1, 2$ define $f_i(v) = f(v)$ for $v \in V(G_i) - \{v_1,
a_1\}$.
 Then for $S \subseteq V(G_2)$ with $v_1 \notin S$ we have $c(G-S) =
c(G_2-S)$ and $f_2(v) = f(v)$ for all $v \in S$, so (\ref{TS}) holds for
$G_2$, $f_2$ and $S$.
 Choose $f_2(v_1)$ to be the minimum integer such that (\ref{TS}) holds
for $G_2$ and $f_2$, then by minimality there is $W \subseteq V(G_2)$
with $v_1 \in W$ so that
 $$c(G_2-W) = \sum_{v \in W} (f_2(v)-1)
	= \sum_{v \in W-\vv} (f(v)-1) + f_2(v_1)-1. $$
 Let $f_1(v_1) = f(v_1)-f_2(v_1)+2$ and $f_1(a_1) = |V(G_1)|+1$ (note
that our theorems do not require that $f(v) \le d_G(v)$).
 We claim that (\ref{TS}) holds for $G_1$ and $f_1$.  This is true for
any $S \subseteq V(G_1)$ with $a_1 \in S_1$, by choice of $f_1(a_1)$.
 If we have nonempty
 $S \subseteq V(G_1)$
 with $a_1, v_1 \notin S$,
 $$c(G_1-S) = c(G-S) \le \sum_{v \in S} (f(v)-1) = \sum_{v \in S}
(f_1(v)-1)$$
 and so (\ref{TS}) holds in this situation as well.  So if (\ref{TS})
does not hold for $G_1$ and $f_1$ there is $U \subseteq V(G_1)$ with
$a_1 \notin U$, $v_1 \in U$ and
 $$c(G_1-U) \ge \sum_{v \in U} (f_1(v)-1) + 1= \sum_{v \in U-\vv} (f(v)-1) +
f_1(v_1).$$
 Let $S = U \cup W$ then $a_1 \notin S$, $v_1 \in S$.
 Note that $c(G-xy-S) = c(B_1-U)+c(L_2-W)$.  If $x \notin U$ then
$c(G-S) \ge c(G-xy-S)-1$ and $c(G_1-U) = c(B_1-U)$, so
 $$c(G-S) \ge c(B_1-U) + c(L_2-W) - 1 = c(G_1-U) + c(G_2-W) - 1$$
and if $x \in U$ then
 $c(G-S)=c(G-xy-S)$ and $c(G_1-U)=c(B_1-U)+1$, so
 $$c(G-S) = c(B_1-U) + c(L_2-W) = c(G_1-U) + c(G_2-W) - 1.$$
 Thus, in either case,
 \begin{eqnarray*}
  c(G-S) &\ge& c(G_1-U) + c(G_2-W) - 1 \\
	&\ge& \sum_{v \in S-\vv} (f(v) - 1) + f_1(v_1) + f_2(v_1)-1 -1 \\
	&=& \sum_{v \in S-\vv} (f(v) - 1) + f(v_1)
	\;>\; \sum_{v \in S} (f(v)-1) \\
 \end{eqnarray*}
 contradicting (\ref{TS}) for $G$ and $f$.  Thus, $G_1$ and $f_1$
satisfy (\ref{TS}).

 Now $a_1$ does not belong to any N2C of $G_1$ since it has degree $2$,
so $G_1$ is fundamental.  Choosing root edge $r_0s_0 = a_1x$ and
applying Theorem \ref{maintreefund}, there is a spanning $f_1$-tree
$T_1$ of $G_1$ with
 $d_{T_1}(x) \le f_1(x)-1$
 and $a_1 x \notin E(T_1)$.
 Thus, $a_1 v_1 \in E(T_1)$ and $T_1' = T_1 - a_1$ is a spanning
$f_1$-tree of $G_1-a_1 = B_1$ with $d_{T_1'} (v) \le f_1(v)-1$ for $v
\in \{x,v_1\}$.

 Since $|V(B_1)| \ge 3$, $|V(G_2)| < |V(G)|$, so by induction there is a
spanning $(f_2-\de_\zz)$-tree $T_2$ of $G_2$ (i.e., an $f_2$-tree with
$d_{T_2}(\zz) \le f_2(\zz)-1$).  Suppose that some such tree $T_2$ has
$v_1 y \in E(T_2)$.  Then
 $T = T_1' \cup (T_2-v_1y) \cup xy$
 has $d_T(v) \le f_1(v) = f(v)$ for $v \in V(B_1)-\{x,v_1\}$, $d_T(v)
\le f_2(v) - \de_\zz(v) = f(v)-\de_\zz(v)$ for $v \in V(L_2)-\{v_1,y\}$,
and
 \begin{eqnarray*}
 d_T(x) &\le& (f_1(x)-1) + 1 = f(x), \\
 d_T(v_1) &\le& (f_1(v_1)-1) + (f_2(v_1)-\de_\zz(v_1)-1) =
	f(v_1)-\de_\zz(v_1), \\
 d_T(y) &\le& f_2(y)-\de_\zz(y)-1+1 = f(y)-\de_\zz(y).
 \end{eqnarray*}
 Thus, $T$ is our desired spanning tree.  We may henceforth
assume that every $(f_2-\de_\zz)$-tree $T_2$ of $G_2$ has $v_1 y \notin
E(T_2)$, and so is a spanning tree of $L_2$.

 Now define $g_1$ on $V(G_1)$ by
 $g_1(x) =
f_1(x)+1=f(x)+1$, $g_1(v_1)=f_1(v_1)-1=f(v_1)-f_2(v_1)+1$,
and $g_1(v) = f_1(v)$ for $v \in V(G_1) -
\{x,v_1\}$.  Suppose that $G_1$ and $g_1$ satisfy (\ref{TS}).  By
similar reasoning to that for $T_1'$ above, there is a spanning
$g_1$-tree $R_1'$ of $B_1$ with $d_{R_1'}(v) \le g_1(v)-1$ for $v \in
\{x, v_1\}$.  Taking $T = R_1' \cup T_2$ (remembering that $T_2$ is now
a spanning tree of $L_2$) we get a spanning tree of $G$ with $d_T(v) \le
g_1(v) = f(v)$ for $v \in V(B_1)-\{x,v_1\}$, $d_T(v) \le f_2(v) -
\de_\zz(v) = f(v)-\de_\zz(v)$ for $v \in V(L_2)-\{v_1,y\}$, and
 \begin{eqnarray*}
 d_T(x) &\le& (g_1(x)-1) = f(x), \\
 d_T(v_1) &\le& (g_1(v_1)-1) + (f_2(v_1)-\de_\zz(v_1)) =
	f(v_1)-\de_\zz(v_1), \\
 d_T(y) &\le& f_2(y)-\de_\zz(y) = f(y)-\de_\zz(y).
 \end{eqnarray*}
 Thus, $T$ is our desired spanning tree. We may henceforth assume that
$G_1$ and $g_1$ do not satisfy (\ref{TS}).

 Let $G_2^+ = L_2 \cup v_1 a_2 y$, where $a_2$ is a new vertex; $G_2^+$
is a minor of $G$ and hence is $K_4$-minor-free.  Extend $f_2$ to
$G_2^+$ by defining $f_2(a_2) = |V(G_2^+)|+1$.  Suppose that $G_2^+$ and
$f_2$ satisfy (\ref{TS}).
 Since $|V(B_1)| \ge 3$, we have
$|V(G_2^+)| < |V(G)|$, and so
 by induction there is a spanning $(f_2-\de_\zz)$-tree $Q_2$ of $G_2^+$.
 If $Q_2$ uses the path $v_1 a_2 y$ then $G_2$ has a spanning
$(f_2-\de_\zz)$-tree using $v_1 y$, which we have assumed does not
exist.  So $d_{Q_2}(a_2) = 1$ and deleting whichever of $v_1 a_2$, $a_2
y$ is an edge of $Q_2$ gives a spanning $(f_2-\de_zz)$-tree $Q_2'$ of
$L_2$ for which there is $w \in \{v_1, y\}$ such that $d_{Q_2'}(w) \le
f_2(w)-\de_\zz(w)-1$.  Let $w'$ be the vertex of $\{v_1,y\}$ other than
$w$.  Now $T_1' \cup Q_2' \cup xy$ is a connected spanning subgraph
containing a cycle through $x$, $v_1$ and $y$.  Let $w't$ be an edge of
this cycle incident with $w'$ and let $T = (T_1' \cup Q_2' \cup xy) -
w't$, which is a spanning tree in $G$.
 We have $d_T(v) \le
f_1(v) = f(v)$ for $v \in V(B_1)-\{x,v_1\}$, $d_T(v) \le f_2(v) -
\de_\zz(v) = f(v)-\de_\zz(v)$ for $v \in V(L_2)-\{v_1,y\}$, and
 \begin{eqnarray*}
 d_T(x) &\le& (f_1(x)-1)+1 = f(x), \\
 d_T(v_1) &\le& (f_1(v_1)-1) + (f_2(v_1)-\de_\zz(v_1))-1 =
	f(v_1)-\de_\zz(v_1), \\
 d_T(y) &\le& (f_2(y)-\de_\zz(y))+1-1 = f(y)-\de_\zz(y),
 \end{eqnarray*}
 where we add $1$ to the degrees of $x$ and $y$ due to $xy$, but
subtract $1$ from the degrees of $v_1$ and $y$ because $\{v_1, y\} =
\{w, w'\}$.
 Thus, $T$ is our desired spanning tree.  We may henceforth assume that
$G_2^+$ and $f_2$ do not satisfy (\ref{TS}).

 Now $G_1$ and $g_1$ do not satisfy (\ref{TS}), although $G_1$ and $f_1$
do.  Suppose (\ref{TS}) fails for $U \subseteq V(G_1)$.
 By definition of $g_1$, we see that $G_1$, $g_1$ and a specific set $S
\subseteq V(G_1)$ do satisfy (\ref{TS}) provided $v_1 \notin S$, $a_1
\in S$, or $\{x, v_1\} \subseteq S$ (since
$g_1(x)+g_1(v_1)=f_1(x)+f_1(v_1)$).
 Thus, $v_1 \in U$, $a_1 \notin U$ (so $U
\subseteq V(B_1)$), $x \notin U$, and
 $$c(B_1-U) = c(G_1-U) \ge \sum_{v \in U} (g_1(v)-1) + 1
	= \sum_{v \in U-\vv} (f(v)-1) + f(v_1)-f_2(v_1) +1.$$
 Also $G_2^+$ and $f_2$ do not satisfy (\ref{TS}), although $G_2$ and
$f_2$ do.  Suppose (\ref{TS}) fails for $W \subseteq V(G_2)$.
 We see that $G_2^+$, $f_2$ and a specific set $S \subseteq V(G_2)$ do
satisfy (\ref{TS}) provided $a_2
\in S$, or $a_2 \notin S$ and $c(G_2^+-S)=c(G_2-S)$.  The only
situation where $a_2 \notin S$ and $c(G_2^+-S)
\ne c(G_2-S)$ is when $\{v_1, y\} \subseteq S$.  Thus, $a_2 \notin W$ (so $W \subseteq V(L_2)$), $\{v_1, y\}
\subseteq W$, and
 $$c(L_2-W) = c(G_2^+-W)-1 \ge \sum_{v \in W} (f_2(v)-1)
	= \sum_{v \in W-\vv} (f(v)-1) + f_2(v_1)-1.$$
 Letting $S=U \cup W \subseteq V(G)$, we have $x \notin S$ but $\{v_1,
y\} \subseteq S$, and therefore
 \begin{eqnarray*}
  c(G-S) &=& c(B_1-U) + c(L_2-W) \\
	&\ge& \sum_{v \in S-\vv} (f(v)-1) + f(v_1) - f_2(v_1) + 1 +
			f_2(v_1) - 1 \\
	&=& \sum_{v \in S-\vv} (f(v)-1) + f(v_1)
	\;>\; \sum_{v \in S} (f(v)-1)
 \end{eqnarray*}
 which contradicts (\ref{TS}) for $G$ and $f$.  Thus, this final
situation never happens, and we can always find the required spanning
tree $T$.
 \end{proof}

\section{Concluding remarks}

In this section, we show the existence of planar graphs
 $G$ such that $c(G-S)\le \sum_{v\in S}(f(v)-1)$ for
every $S\subseteq V(G)$ and $S\ne \emptyset$, but where
$G$ has no spanning tree $T$ such that $d_T(v)\le f(v)$ for all $v\in
V(G)$, where there is an integer $f(v) \ge 2$ for each vertex.

Let $G$ be a $1$-tough planar graph with an integer $f(v) \ge 2$ for
each vertex, and let $G'$ be obtained  by attaching $f(v)-2$ pendant
edges to each $v \in V(G)$. Let $f(v)=2$ for $v\in V(G')-V(G)$.
 Since $c(G-S)\le |S|$ for every $S \subseteq V(G)$ with $S \ne \emptyset$, it
is not too hard to show that $c(G'-S')\le \sum_{v\in S'}(f(v)-1)$ for
every $S'\subseteq V(G')$ with $S'\ne \emptyset$.  If $G$ has no spanning
2-tree then $G'$ has no spanning tree $T$ such that $d_T(v)\le f(v)$ for
all $v\in V(G')$. Thus, it suffices to show the existence of a 1-tough
planar graph $G$ with no spanning 2-tree, i.e., no hamiltonian path.

A vertex in a graph is called a {\it simplicial vertex} if the neighbors
of this vertex form a clique in the graph. The graph $T$ given in
Figure~\ref{nhp}, constructed by Dillencourt~\cite{MR1115734} is
1-tough, non-hamiltonian, and maximal planar. Proposition 5 of that
paper states that any path in $T$ connecting any two of the three
vertices $A,B,C$ must omit at least one simplicial vertex.
In particular, $T$ has no hamiltonian path with both ends in $A, B, C$.

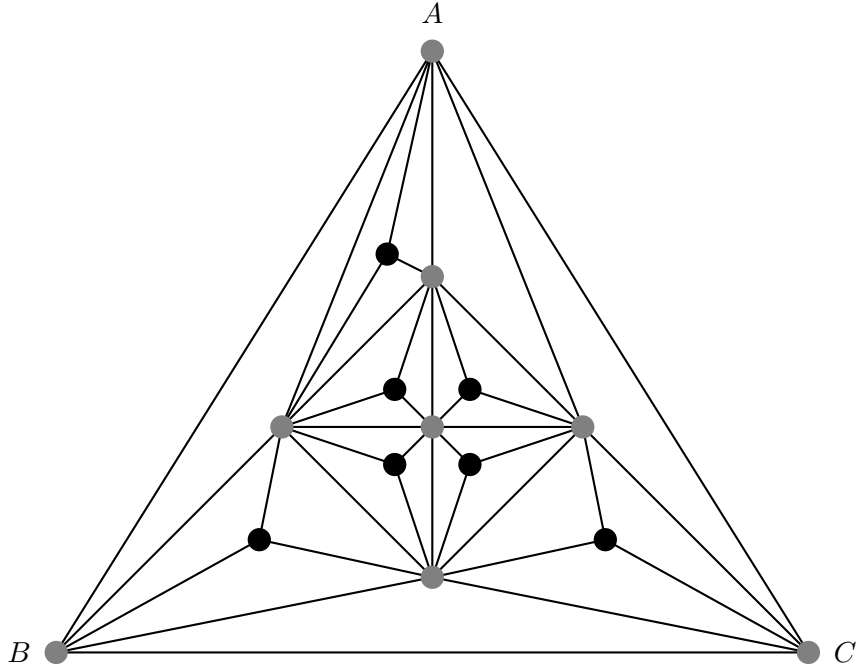
\begin{figure}[!htb]
\begin{center}
\begin{tikzpicture}[thick, scale=1]
 \tkzDefPoint(0,5){A}
    \tkzDefPoint(-5,-3){B}
    \tkzDefPoint(5,-3){C}
\draw [black, thick] (0,5)--(-5,-3)--(5,-3)--(0,5)--(0,2)--(0,0)--(0,-2)--
(-5,-3)--(-2.3,-1.5)--(-2,0)--(0,5)--(-0.6,2.3)--(0,2)--(-2,0)--(-0.5,0.5)--(0,2)
--(2,0)--(0.5,0.5)--(0,0)--(-2,0)--(0,-2)--(-0.5,-0.5)--(0,0)--(0.5,-0.5)--
(2,0)--(0,-2)--(2.3,-1.5)--(2,0)--(5,-3)--(0,-2);

\draw [black, thick] (-0.6,2.3)--(-2,0)--(-5,-3);
\draw [black, thick] (0,5)--(2,0);
\draw [black, thick] (-2.3,-1.5)--(0,-2);
\draw [black, thick] (-0.5,0.5)--(0,0)--(2,0);
\draw [black, thick] (-2,0)--(-0.5,-0.5);
\draw [black, thick] (0,2)--(0.5,0.5);
\draw [black, thick] (0,-2)--(0.5,-0.5);
\draw [black, thick] (5,-3)--(2.3,-1.5);


\draw [fill=white] (0,5) circle (4pt);
\draw [fill=white] (-5,-3) circle (4pt);
\draw [fill=white] (5,-3) circle (4pt);
\draw [fill=white] (0,2) circle (4pt);
\draw [fill=white] (2,0) circle (4pt);
\draw [fill=white] (0,0) circle (4pt);
\draw [fill=white] (0,-2) circle (4pt);
\draw [fill=white] (-2,0) circle (4pt);

\filldraw [black] (-2.3,-1.5) circle (4pt);
\filldraw [black] (-0.6,2.3) circle (4pt);
\filldraw [black] (-0.5,0.5) circle (4pt);
\filldraw [black] (0.5,0.5) circle (4pt);
\filldraw [black] (-0.5,-0.5) circle (4pt);
\filldraw [black] (0.5,-0.5) circle (4pt);
\filldraw [black] (2.3,-1.5) circle (4pt);

\node [] at (0,5.5) { $A$};
\node [] at (-5.3,-3) { $B\quad$};
\node [] at (5.3,-3) { $\quad C$};
\end{tikzpicture}
\end{center}
  \caption{{\small The 1-tough nonhamiltonian maximal planar graph $T$.
   The black vertices are the simplicial vertices.
}}\label{nhp}
\end{figure}

 Dillencourt constructed a sequence of 1-tough nonhamiltonian maximal
planar graphs as follows. Let $G_1=T$. For $n\ge 2$, let $G_n$ be
obtained from $G_{n-1}$ by deleting every simplicial vertex and
replacing it with a copy of $T$. More precisely, for each simplicial
vertex $u$, let $x,y,z$ be its neighbors. Delete $u$, insert a copy
$T_u$ (with vertices $A_u$, $B_u$, etc.) of $T$ inside the triangle
$(xyz)$, and add the edges $A_ux, A_uy, B_uy, B_uz, C_uz, C_ux$.

 Suppose that $G_n$, $n \ge 2$, has a hamiltonian path $P$.
 Since $G_{n-1}$ has at least three simplicial vertices,  for some such
vertex $u$ the copy $T_u$ of $T$ in $G_n$ contains neither end of $P$.
 The structure of $G_n$ guarantees that $P \cap T_u$ must have one of two
forms: (1) a single hamiltonian path in $T_u$ with ends being two of $A_u,
B_u, C_u$, or (2) a union of two paths $P_1$ and $P_2$, where $P_1$ is a
one-vertex path using one of $A_u, B_u, C_u$, and $P_2$ is a path between
the other two of $A_u, B_u, C_u$ using all other vertices of $T_u$.  In case
(2) we can join the vertex of $P_1$ to either end of $P_2$ to obtain a
hamiltonian path in $T_u$ with both ends in $A_u, B_u, C_u$.  So either
situation means $T$ has a hamiltonian path with ends being two of $A, B,
C$, which we know does not happen.  Therefore, $G_n$ is a $1$-tough
maximal planar graph with no hamiltonian path.

 \smallskip
 We conclude with a general question.
 Suppose $G$ is a tree, i.e., a connected graph of treewidth at most
$1$.
 Then the sufficient condition (\ref{suf-ftree}), the necessary
condition (\ref{nec-ftree}), and the even weaker condition
 \begin{equation}\label{weak-ftree}
 c(G-S) \;\;\le\;\; \sum_{v \in S} f(v)
	\quad\hbox{for all $S \subseteq V(G)$ with $S \ne \emptyset$}
 \end{equation}
 are all equivalent and all imply that $d_G(v) \le f(v)$ for
all $v \in V(G)$, i.e., that $G$ has (in fact, is) a spanning $f$-tree.
 (When $f(v)=k$ for all $v$, (\ref{weak-ftree}) just says that $G$ is
$\frac{1}{k}$-tough.)
 If $G$ has treewidth at most $2$, we have a sufficient condition
(\ref{TS}) that is only slightly stronger than the necessary condition
(\ref{nec-ftree}).  It therefore seems natural to ask the following.

 \begin{QUES}
 Is there a natural strengthening of Theorem \ref{ENV-ftree} involving
treewidth?  More specifically, suppose we have a connected graph $G$ of
treewidth $h$, and $f: V(G) \to \mZ$.
 Is there a condition involving $h$ and becoming stronger as $h$
increases, similar to (\ref{nec-ftree}) or (\ref{TS}) but weaker than
(\ref{suf-ftree}), that implies that $G$ has a spanning $f$-tree?
 \end{QUES}

 \section*{Acknowledgement}

 The authors thank an anonymous referee who pointed out a problem with
an earlier version of Proposition \ref{com-counting}.

\bibliographystyle{plain}

\newdimen\pri\pri=\parindent 
\def\bibitemx#1{\smallbreak\leavevmode%
	\hangindent\parindent\noindent\hbox{}\kern-\pri[#1] }
\let\newblock\relax

\end{document}